\documentclass[11pt,a4paper]{amsart}
\usepackage[margin=3cm]{geometry}
\usepackage{color}               
\usepackage{faktor}
\usepackage{tikz}
\usepackage{tikz-cd}
\usepackage{spverbatim}
\usepackage{pstricks}
\usepackage{pdftricks}

\usepackage[pdftex,hyperfootnotes=false,colorlinks=true,linkcolor=blue,
citecolor=purple,filecolor=magenta,urlcolor=cyan,hypertexnames=false]{hyperref}

\usepackage{amsmath,amssymb,anyfontsize,enumerate,stmaryrd,mathtools, thmtools,tikz,txfonts,nccmath}
\usepackage[oldstyle]{libertine}%
\usepackage[T1]{fontenc}
\usepackage[utf8]{inputenc}
\usepackage{csquotes}
\makeatletter
\renewcommand*\libertine@figurestyle{LF}
\makeatother
\usepackage[libertine,libaltvw,liby]{newtxmath}
\makeatletter
\renewcommand*\libertine@figurestyle{OsF}
\makeatother
\usepackage{tikz-cd}
\usepackage[noabbrev]{cleveref}
\usepackage{autonum}
\hyphenation{Gro-then-dieck}
\crefname{lemma}{lemma}{lemmata}
\Crefname{lemma}{Lemma}{Lemmata}
\crefname{subsection}{subsection}{subsections}
\Crefname{subsection}{Subsection}{Subsections}

\newtheorem{theorem}{Theorem}[section]
\newtheorem{lemma}[theorem]{Lemma}

\newtheorem{corollary}[theorem]{Corollary}

\theoremstyle{definition}
\newtheorem{definition}[theorem]{Definition}
\newtheorem{remark}[theorem]{Remark}

\newtheorem{example}[theorem]{Example}

\newcommand{\R}{\mb{R}}

\newcommand{\Z}{\mb{Z}}

\newcommand{\cor}[1]{\bigg{\langle} \,  #1 \, \bigg{\rangle}}

\DeclareMathOperator\Aut{Aut}
\DeclareMathOperator\val{val}

\def\Z{\mathbb{Z}}

\def\R{\mathbb{R}}

\def\CP1{\mathbb{C}\mathrm{P}^1}

\DeclareMathOperator{\vir}{vir}

\newcommand\restr[2]{{
  \left.\kern-\nulldelimiterspace 
  #1 
  \vphantom{\big|} 
  \right|_{#2} 
  }}

\newcommand\restrst[2]{{
  \left.\kern-\nulldelimiterspace 
  #1 
  \vphantom{\big|} 
  \right|^*_{#2} 
  }}

\begingroup\expandafter\expandafter\expandafter\endgroup
\expandafter\ifx\csname pdfsuppresswarningpagegroup\endcsname\relax
\else
  \pdfsuppresswarningpagegroup=1\relax
\fi
{}

\title{Wall-crossing and recursion formulae for tropical Jucys covers}
\author[M.~A.~Hahn]{Marvin Anas Hahn}
\address{M.~A.~Hahn: Institut für Mathematik, Goethe-Universität Frankfurt, Robert-Mayer-Str. 6-8, 60325 Frankfurt am Main}
\email{hahn@math.uni-frankfurt.de}
\author[D.~Lewa\'{n}ski]{Danilo Lewa\'{n}ski}
\address{D.~L.: Max Planck Institut f\"{u}r Mathematik, Vivatsgasse 7, 53111 Bonn, Germany.}
\email{ilgrillodani@mpim-bonn.mpg.de}

\keywords{Hurwitz numbers, Jucys correspondence, flows}
\subjclass[2010]{14N10, 14N35, 14T05}

\begin{document}
\begin{abstract}
In recent work, the authors derived a tropical interpretation of monotone and strictly monotone double Hurwitz numbers. In this paper, we apply the technique of tropical flows to this interpretation in order to provide a new proof of the piecewise polynomiality of these enumerative invariants. Moreover, we derive new types of wall-crossing formulae.
\end{abstract}

\maketitle
\tableofcontents

\section{Introduction}
Hurwitz numbers \cite{Hurwitz} count branched genus $g$ coverings of the projective line with fixed ramification data. These objects connect several areas of mathematics, such as algebraic geometry, representation theory, mathematical physics and many more. In particular, they admit several equivalent definitions, among which is an interpretation due to Hurwitz in terms of factorisations in the symmetric group \cite{Hurwitz2}. From this interpretation many variants of Hurwitz numbers arise by imposing additional conditions on the factorisations. In this paper, we focus on two such variants, namely \textit{monotone} and \textit{strictly monotone Hurwitz numbers}. Monotone Hurwitz numbers were introduced in \cite{goulden2014monotone} in the context random matrix theory as the coefficients in the asymptotic expansion of the HCIZ integral, while strictly monotone Hurwitz numbers are equivalent to counting certain Grothendieck dessins d'enfants \cite{ALS}.\par 
In studying Hurwitz numbers, one often restricts onself to special allowed types of ramification. An important case is the one of \textit{single Hurwitz numbers}, where one allows arbitrary ramification over $\infty$ but only simple ramification (i.e. ramification profile $(2,1,\dots,1)$) over $b$ other points, where $b$ is determined by the Riemann-Hurwitz formula. These numbers admit a stunning connection to Gromov-Witten theory: the celebrated ELSV formula expresses single Hurwitz numbers in terms of intersection numbers on the moduli space of stable curves with marked points $\overline{\mathcal{M}}_{g,n}$ \cite{ekedahl2001hurwitz}. As a direct consequence single Hurwitz numbers are polynomial in the ramification profile over $\infty$ up to a combinatorial factor.\par 
From the study of single Hurwitz numbers, it is natural to consider arbitrary ramification over two points and simple ramification else. The numbers one obtains this way are called \textit{double Hurwitz numbers}. It is an open question whether double Hurwitz numbers satisfy an ELSV-type formula, i.e. an expression in terms of intersection numbers on some moduli space. One idea to approach this problem was introduced by Goulden, Jackson and Vakil in \cite{goulden2005towards}. Namely, one studies double Hurwitz numbers with a view towards polynomial behaviour. This may give an indication of the shape of an ELSV-type formula. In their work Goulden, Jackson and Vakil observe that double Hurwitz numbers are piecewise polynomial in the entries of the two arbitrary ramification profiles and determine the chambers of polynomiality. We note that this polynomiality is not up to a combinatorial factor. This leads them to a concrete conjecture on the shape of the ELSV-type formulae with the condition that all covers are fully ramified over $\infty$ which they prove for genus $0$ and genus $1$.\par 
This piecewise polynomial behaviour was further studied in work of Shadrin, Shapiro and Vainshtein, where it was observed that in genus $0$ the difference of the polynomials in two adjacent chambers may be expressed in terms of Hurwitz numbers with smaller input data \cite{shadrin2008chamber}. This was generalised to arbitrary genus by Cavalieri, Johnson and Markwig in \cite{CJMa} using tropical geometry and by Johnson in \cite{johnson2015double} in terms of the fermionic Fock space formalism.

\subsection{(Strictly) monotone double Hurwitz numbers}
In recent years it was shown in several instances that (strictly) monotone Hurwitz numbers share many features with their classical counterparts. For example, single monotone Hurwitz numbers satisfy an ELSV-type formula \cite{ALS}, the so-called Chekhov-Eynard-Orantin (CEO) topological recursion \cite{ALS}, and strictly monotone Hurwitz numbers satisfy CEO topological recursion in the so-called \textit{orbifold case} \cite{dumitrescu2013spectral,dunin2017combinatorics,norbury2013string, do2013quantum,kazarian2015virasoro}. Moreover, it was proved in \cite{DK,hahn2017monodromy} that (strictly) monotone double Hurwitz numbers are related to tropical geometry.  More precisely, there is an expression in terms of \textit{combinatorial covers} which are graphs related to tropical covers but decorated with extra combinatorial data. A common theme in studying (strictly) monotone double Hurwitz numbers is to consider some refinement of the enumeration and obtaining results for this refinement. An important example is the study of recursive behaviour of monotone Hurwitz numbers. A recursion for single monotone Hurwitz numbers was proved in \cite{dunin2019cut, goulden2013monotone}, while a recursion for monotone orbifold Hurwitz numbers and monotone double Hurwitz numbers remains an open question. However, it is possible to express monotone orbifold/double Hurwitz numbers as a sum of enumerations and deriving recursions for each summand. This approach was taken in \cite{DK} for the monotone orbifold Hurwitz numbers and in \cite{hahn2019triply} for the monotone double Hurwitz numbers, where each summand correspond to certain decorations on the combinatorial covers.\par 
In \cite{goulden2016toda}, it was proved that monotone double Hurwitz numbers behave piecewise polynomially with the same chambers of polynomials as the usual double Hurwitz numbers. This polynomial behaviour was further studied by the first author in \cite{hahn2017monodromy}, in terms of the aforementioned combinatorial covers. Using Ehrhart theory, algorithms were developed which compute the polynomials for monotone double Hurwitz numbers. We note that a priori these algorithms compute quasi-polynomials in a chamber structure much finer than necessary. In other words, the polynomial structure of monotone double Hurwitz numbers is not fully visible from this tropical viewpoint. However, it was possible to derive wall-crossing formulae in genus $0$.\par 
Motivated by the work in \cite{johnson2015double}, Kramer and the authors studied the piecewise polynomial behaviour of (strictly) monotone double Hurwitz numbers in the fermionic Fock space formalism in \cite{HKL}. In particular, it was proved that strictly monotone double Hurwitz numbers are piecewise polynomial which was an open question at the time. Moreover, a refinement of the generating series of (strictly) monotone double Hurwitz numbers was introduced, i.e. a larger generating series which specialises to the generating of (strictly) monotone double Hurwitz numbers. It was proved that this refinement admits wall-crossing formulae.

\subsection{Results}
In \cite{HL}, the authors derived a new interpretation of monotone and strictly monotone double Hurwitz numbers in terms of tropical covers which are weighted by Gromov-Witten invariants without any additional combinatorial decoration. In this paper, we use this new interpretation and apply the methods developed in \cite{CJMa} to study the wall-crossing behaviour of (strictly) monotone double Hurwitz numbers in arbitrary genus. In a sense, we take an opposite approach to \cite{HKL}. In \cite{HKL}, the generating series computing (strictly) monotone double Hurwitz numbers was enlarged and wall-crossing formulae were derived for this enlarged series. In this paper, we observe that using this new tropical interpretation, (strictly) monotone double Hurwitz numbers may naturally be written as a sum of smaller invariants, which we call $\lambda$\textit{-invariants}. These $\lambda-$invariants correspond to (ordered) partitions of the number of intermediate simple branch points and can be expressed as vacuum expectations of certain operators in the bosonic Fock space formalism and are thus not just obtained by combinatorial data. Moreover, it was proved in \cite[Theorem 5.10]{hahn2019triply} that the generating series of these invariants for elliptic base curves yield quasimodular forms.\par 
In \cref{thm:poly}, we prove that the $\lambda-$invariants are piecewise polynomial with the same chambers of polynomiality as the usual double Hurwitz numbers, thus giving a new proof of the piecewise polynomiality of (strictly) monotone double Hurwitz numbers. We further derive wall-crossing formulae for the $\lambda-$invariants in \cref{thm:wall} and a recursion in \cref{thm:rec}.

\subsection{Structure of this paper}
In \cref{sec:prelim}, we recall some of the basic facts around Hurwitz theory and tropical geometry. In \cref{sec:polwall}, we introduce the necessary notation to state two of our main results. Mainly, we state a piecewise polynomiality results in \cref{thm:poly} andd wall-crossing formulae for the aforementioned $\lambda-$invariants. In \cref{sec:proof}, we prove those theorems. Finally, we derive a recursion for $\lambda-$invariants in \cref{sec:rec}.

\subsection{Acknowledgements}
The authors are thankful to Hannah Markwig for many helpful correspondences and comments on an earlier draft. The first author gratefully acknowledges financial support as part of the LOEWE research unit 'Uniformized structures in Arithmetic and Geometry'. D.~L.~is supported by the Max Planck Gesellschaft.

\section{Preliminaries}
\label{sec:prelim}
In this section, we recall the basic background needed for this work. In particular, we introduce several variants of Hurwitz numbers in \cref{sec:hur}, review some basics of Gromov-Witten theory in \cref{sec:grom} and recall the tropical correspondence theorems expressing these variants in terms of tropical covers in \cref{sec:trop}. We further fix the notation $\zeta(z)=2\mathrm{sinh}(z/2)=e^{z/2}-e^{-z/2}$ and $\mathcal{S}(z)=\frac{\zeta(z)}{z}$.

\subsection{Hurwitz numbers}
\label{sec:hur}
We define monotone and strictly monotone Hurwitz numbers in terms of the symmetric group which we denote by $S_d$. For a permutation $\sigma\in S_d$, we denote the partition corresponding to its conjugacy class by $C(\sigma)$.

\begin{definition}
\label{def:mono}
Let $g$ be a non-negative integer, $x\in\left(\mathbb{Z}\backslash\{0\}\right)^n$ with $\sum x_i=0$.  Let $x^+$ (resp. $x^-$) be the tuple of positive entries of $x$ (resp. $-x$) and denote $d=|x^+|=|x^-|$. Further, we set $b=2g-2+n$. Then we define a factorisation of type $(g,x)$ to be a tuple $(\sigma_1,\tau_1,\dots,\tau_b,\sigma_2)$, such that
\begin{enumerate}
\item $\sigma_i,\tau_j\in S_d$;
\item $C(\sigma_1)=x^+$, $C(\sigma_2)=x^-$, $C(\tau_i)=(2,1,\dots,1)$;
\item $\sigma_2=\tau_b\cdots\tau_1\sigma_1$;
\end{enumerate}
Further, we denote $\tau_i=(r_i\,s_i)$ with $r_i<s_i$. We call $(\sigma_1,\tau_1,\dots,\tau_b,\sigma_2)$ a \textit{monotone} factorisation if $s_i\le s_{i+1}$ and strictly monotone if $s_i<s_{i+1}$. We then define the \textit{monotone double Hurwitz number} $h_{g;x}^{\le,\bullet}$ to be the number of monotone factorisations times $\frac{1}{d!}$. Analogously, we define the \textit{strictly monotone double Hurwitz number} by $h_{g;x}^{<,\bullet}$ to be the number of strictly monotone factorisations times $\frac{1}{d!}$.\par 
Furthermore, we call a factorisation of type $(g,x)$ transitive if
\begin{enumerate}
\item[(4)] $\langle\sigma_1,\sigma_2,\tau_1,\dots,\tau_b\rangle$ is a transitive subgroup of $S_d$.
\end{enumerate}
Then we define the \textit{connected monotone double Hurwitz number} $h_{g;x}^{\le,\circ}$ and the \textit{connected strictly monotone double Hurwitz number}  $h_{g;x}^{<,\circ}$ as before as the numbers of transitive (strictly) monotone factorisations of type $(g,x)$ times $\frac{1}{d!}$.
\end{definition}

\begin{remark}
By dropping the monotonicity condition on the transpositions in \cref{def:mono}, we obtain so-called \textit{double Hurwitz numbers}. These numbers are equivalent to the enumeration of branched degree $d$ morphisms $C\to\mathbb{P}^1_{\mathbb{C}}$ with ramification profile $x^+$ ($x^-$) over $0$ (resp. $\infty$) and simple ramification over $b$ fixed points of $\mathbb{P}^1_{\mathbb{C}}$.
\end{remark}

\subsection{Gromov-Witten invariants with target $\mathbb{P}^1$}
\label{sec:grom}
We now recall some of the notions of Gromov-Witten theory. A more detailed introduction in the context of tropical covers can be found in \cite{CJMRgraphical}. For a more general introduction to the topic, we recommend \cite{V}.\par
We denote by $\overline{\mathcal{M}}_{g,n}(\mathbb{P}^1,d)$ the moduli space of stable maps with $n$ marked points which a Deligne-Mumford stack of virtual dimension $2g-2+2d+n$. It consists of tuples $(X,x_1,\dots,x_n,f)$, such that $X$ is a connected, projective curve of genus $g$ with at worst nodal singularities, $x_1,\dots,x_n$ are non-singular points on $X$ and $f:X\to\mathbb{P}^1$ is a function with $f_{\ast}([X])=d[\mathbb{P}^1]$. Moreover, $f$ may only have a finite automorphism group (respecting markings and singularities). In order to define enumerative invariants, we introduce
\begin{itemize}
\item The $i-$th evaluation morphism is the map $ev_i:\overline{\mathcal{M}}_{g,n}(\mathbb{P}^1,d)\to\mathbb{P}^1$ obtained by mapping the tuple $(X,x_1,\dots,x_n,f)$ to $x_i$.
\item The $i-$th cotangent line bundle $\mathbb{L}_i\to\overline{\mathcal{M}}_{g,n}(\mathbb{P}^1,d)$ is obtained by identifying the fiber of each point with the cotangent space $\mathbb{T}^*_{x_i}(X)$. The first Chern class of $i-$th cotangent line bundle is called a psi class which we denote by $\psi_i=c_1(\mathbb{L}_i)$.
\end{itemize}

This yields the following definition.

\begin{definition}
Fix $g,n,d$ and let $k_1,\dots,k_n$ be non-negative integers, such that $k_1+\dots+k_n=2g+2d-2$. Then the stationary Gromov-Witten invariant is defined by
\begin{equation}
\langle\tau_{k_1}(pt)\cdots\tau_{k_n}(pt)\rangle_{g,n}^{\mathbb{P}^1}=\int_{[\overline{\mathcal{M}}_{g,n}(\mathbb{P}^1)]^{\vir}}\prod ev_i^*(pt)\psi_i^{k_i},
\end{equation}
where $pt$ denotes class of a point on $\mathbb{P}^1$.
\end{definition}

Similarly, we consider the moduli space of relative stable maps $\overline{\mathcal{M}}_{g,n}(\mathbb{P}^1,\nu,\mu,d)$ relative to two partitions $\mu,\nu$ of $d$ and define the relative Gromov-Witten invariants by

\begin{equation}
\label{eq:gwmunu}
\langle\nu\mid\tau_{k_1}(pt)\cdots\tau_{k_n}(pt)\mid\mu\rangle_{g,n}^{\mathbb{P}^1}=\int_{[\overline{\mathcal{M}}_{g,n}(\mathbb{P}^1,\nu,\mu,d)]^{\vir}}\prod ev_i^*(pt)\psi_i^{k_i}.
\end{equation}

We note that in the following, we add subscripts "$\circ$" and "$\bullet$" which correspond to \textit{connected} or \textit{not necessarily connected} (for simplicity also called \textit{disconnected}) Gromov-Witten invariants which in turn correspond to considering connected or disconnected stable maps.

\subsection{Tropical correspondence theorem}
\label{sec:trop}
We begin by defining abstract tropical curves.

\begin{definition}
\label{def:abstrop}
An \textit{abstract tropical curve} is a connected
metric graph $\Gamma$ with unbounded edges called ends, together with a function associating a genus $g(v)$ to each vertex $v$. Let $V(\Gamma)$ be the set of its vertices. Let $E(\Gamma)$ and $E'(\Gamma)$ be the set of its internal (or bounded) edges and its set of all edges, respectively. The set of ends is therefore $E'(\Gamma) \setminus E(\Gamma)$, and all ends are considered to have infinite length. The genus of an abstract tropical curve $\Gamma$ is
 $ g(\Gamma)\coloneqq h^1(\Gamma) + \sum_{v \in V(\Gamma)} g(v)$,
where $h^1(\Gamma)$ is the first Betti number of the underlying graph.
An \textit{isomorphism} of a tropical curve is an automorphism of the underlying graph that respects edges' lengths and vertices' genera.
The \textit{combinatorial type} of a tropical curve is obtained by disregarding its metric structure.
\end{definition}

As a next step, we consider maps between abstract tropical curves which mirror the situation of covers between Riemann surfaces.

\begin{definition}
\label{def:tropmorph}
A tropical cover is a surjective harmonic map $\pi:\Gamma_1\to\Gamma_2$ between abstract tropical curves as in \cite{ABBR}, i.e.:
\begin{itemize}
\item[\textit{i).}] Let $V(\Gamma_i)$ denote the vertex set of $\Gamma_i$, then we require $\pi(V(\Gamma_1))\subset V(\Gamma_2)$;
\item[\textit{ii).}] Let $E'(\Gamma_i)$ denote the edge set of $\Gamma_i$, then we require $\pi^{-1}(E'(\Gamma_2))\subset E'(\Gamma_1)$;
\item[\textit{iii).}] For each edge $e\in E'(\Gamma_i)$, denote by $l(e)$ its length. We interpret $e\in E'(\Gamma_1),\pi(e)\in E'(\Gamma_2)$ as intervals $[0,l(e)]$ and $[0,l(\pi(e))]$, then we require $\pi$ restricted to $e$ to be a linear map of slope $\omega(e)\in\mathbb{Z}_{\ge0}$, that is $\pi:[0,l(e)]\to[0,l(\pi(e))]$ is given by $\pi(t)=\omega(e)\cdot t$. We call $\omega(e)$ the \textit{weight} of $e$. If $\pi(e)$ is a vertex, we have $\omega(e)=0$.
\item[\textit{iv).}] For a vertex $v\in\Gamma_1$, let $v'=\pi(v)$. We choose an edge $e'$ adjacent to $v'$. We define the local degree at $v$ as
\begin{equation}
d_v=\sum_{\substack{e\in\Gamma_1\\\pi(e)=e'}}\omega_e.
\end{equation}
We require $d_v$ to be independent of the choice of edge $e'$ adjacent to $v'$. We call this fact the \textit{balancing} or \textit{harmonicity condition}.
\end{itemize}

We furthermore introduce the following notions:
\begin{itemize}
\item[\textit{i).}] The \textit{degree} of a tropical cover $\pi$ is the sum over all local degrees of pre-images of any point in $\Gamma_2$. Due to the harmonicity condition, this number is independent of the point in $\Gamma_2$.
\item[\textit{ii).}] For any end $e$, we define a partion $\mu_e$ as the partition of weights of the ends of $\Gamma_1$ mapping to $e$. We call $\mu_e$ the \textit{ramification profile} above $e$.
\end{itemize}
\end{definition}

The following theorem expresses monotone and strictly monotone double Hurwitz numbers in terms of tropical covers weighted by Gromov-Witten invariants.

\begin{theorem}[\cite{HL}]
\label{thm:trop}
Let $g$ be a non-negative integer, and  $x\in\left(\mathbb{Z}\backslash\{0\}\right)^n$ wih $|x^+|=|x^-|=d$. 

\begin{align}
h_{g; x}^{\leq,\bullet}&=\sum_{\lambda\vdash b}
\sum_{\pi \in \Gamma( \mathbb{P}^1_{\text{trop}}, g; x,\lambda)}\frac{1}{|\mathrm{Aut}(\pi)|}\frac{1}{\ell(\lambda)!}\prod_{v \in V(\Gamma)} m_v \prod_{e \in E(\Gamma)} \omega_e
\\
h_{g; x}^{<,\bullet}&=\sum_{\lambda\vdash b}
\sum_{\pi \in \Gamma( \mathbb{P}^1_{\text{trop}}, g; x,\lambda)}\frac{1}{|\mathrm{Aut}(\pi)|}\frac{1}{\ell(\lambda)!}\prod_{v \in V(\Gamma)} (-1)^{1 + \mathrm{val}(v)} m_v \prod_{e \in E(\Gamma)} \omega_e
\end{align}
where $\Gamma( \mathbb{P}^1_{\text{trop}}, g; x,\lambda)$ is the set of tropical covers 
$
\pi: \Gamma \longrightarrow \mathbb{P}^1_{trop} = \mathbb{R}
$
with $b=2g-2+n$ points $p_1,\dots,p_b$ fixed on the codomain $\mathbb{P}^1_{trop}$ and $\lambda$ an ordered partition of $b$, such that
\begin{itemize}
\item[\textit{i).}] The unbounded left (resp. right) pointing ends of $\Gamma$ have weights given by the partition $x^+$ (resp. $x^-$).
\item[\textit{ii).}] The graph $\Gamma$ has $l:=\ell(\lambda)\le b$ vertices. Let $V(\Gamma) = \{v_1, \dots, v_l\}$ be the set of its vertices. Then we have $\pi(v_i)=p_i$ for $i=1,\dots,l$. Moreover, let $w_i  = \mathrm{val}(v_i)$ be the corresponding valencies.
\item[\textit{iii).}] We assign an integer $g(v_i)$ as the genus to $v_i$ and the following condition holds true

\begin{equation}
h^1(\Gamma) + \sum_{i=1}^l  g(v_i) = g.
\end{equation}
\item[\textit{iv).}] We have $\lambda_i=\mathrm{val}(v_i)+2g(v_i)-2$.
\item[\textit{v).}] For each vertex $v_i$, let $y^{+}$ (resp. $ y^-$) be the tuple of weights of those edges adjacent to $v_i$ which map to the right-hand (resp. left-hand) of $p_i$. The multiplicity $m_{v_i}$ of $v_i$ is defined to be
\begin{align}
m_{v_i} = &(\lambda_i-1)!|\mathrm{Aut}(y^{+})||\mathrm{Aut}(y^{-})|\\
&\sum_{g_1^i+g_2^i=g(v_i)}\cor{ \!\! \tau_{2g^i_2 - 2}(\omega) \!\! }_{g^i_2}^{\mathbb{P}^1, \circ} \cor{ \!\!y^+,   \tau_{2g^i_1 - 2 + n}(\omega) , y^-\!\!  }^{\mathbb{P}^1,\circ}_{g^i_1}
\end{align}
\end{itemize}
Furthermore, we obtain $h_{g;x}^{\le,\circ}$ and  $h_{g;x}^{<,\circ}$ by considering only connected source curves.
\end{theorem}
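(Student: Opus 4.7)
The plan is to derive this theorem by combining the bosonic Fock space expression of (strictly) monotone double Hurwitz numbers with the Gromov--Witten/Hurwitz correspondence of Okounkov and Pandharipande, following the strategy employed in \cite{HL}. First I would recall that in both the monotone and strictly monotone cases one has a vacuum expectation value description of the form
\begin{equation*}
h_{g;x}^{*,\bullet}=\frac{1}{\prod_i x_i^+\,\prod_j x_j^-}\,\langle 0|\,\alpha_{x^+}\,\mathcal{M}^{*}(b)\,\alpha_{-x^-}\,|0\rangle,\qquad *\in\{\le,<\},
\end{equation*}
where $\mathcal{M}^{*}(b)$ is an ordered product of $b$ operator insertions expressible in terms of the Okounkov--Pandharipande Casimir operators; the two cases $\le$ and $<$ differ in the sign conventions within the insertions, which is what ultimately produces the factor $(-1)^{1+\mathrm{val}(v)}$ in the strict case.

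Next I would reorganise the product $\mathcal{M}^{*}(b)$ by clustering the $b$ consecutive insertions into groups of sizes $\lambda_1,\ldots,\lambda_{\ell(\lambda)}$, indexed by a composition $\lambda\vdash b$. Each cluster will become a ``fat vertex'' of the tropical cover, and the positional order of the clusters on $\mathbb{R}$ produces the tropical source graph with vertices in the prescribed order. After this reorganisation the remaining vacuum expectation is the matrix element of $\ell(\lambda)$ fat-vertex operators between the boundary states $\alpha_{x^\pm}|0\rangle$, which one computes by Wick's theorem: each pairing of creation and annihilation modes becomes an edge of a tropical cover $\pi\colon\Gamma\to\mathbb{P}^1_{\mathrm{trop}}$ with weight $\omega_e$ equal to the mode. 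Balancing at each vertex holds automatically because contractions preserve the modes, and the factor $1/|\mathrm{Aut}(\pi)|$ compensates for the overcounting of isomorphic contractions.

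The final step is to identify each fat-vertex matrix element with the Gromov--Witten multiplicity $m_{v_i}$. By the Gromov--Witten/Hurwitz correspondence, a matrix element of a cluster of $\lambda_i$ Casimir insertions between basis states of profiles $y^\pm$ equals, up to the automorphism factors $|\mathrm{Aut}(y^+)||\mathrm{Aut}(y^-)|$ and the combinatorial factor $(\lambda_i-1)!$ coming from the orderings within the cluster, the connected relative stationary invariant $\cor{y^+,\tau_{2g_1^i-2+n}(\omega),y^-}^{\mathbb{P}^1,\circ}_{g_1^i}$. Passing from the connected to the disconnected ($\bullet$) theory at the vertex then produces the sum over $g_1^i+g_2^i=g(v_i)$, with the extra factor $\cor{\tau_{2g_2^i-2}(\omega)}^{\mathbb{P}^1,\circ}_{g_2^i}$ accounting for a possibly non-empty floating connected component of the vertex contribution.

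The hard part will be the precise combinatorial bookkeeping: one must simultaneously match the factors $(\lambda_i-1)!$, $|\mathrm{Aut}(y^\pm)|$, $|\mathrm{Aut}(\pi)|$ and $1/\ell(\lambda)!$ arising from the Wick contractions, the clusterings, and the identification between operator modes and edge weights, and also justify the two-factor splitting of $m_{v_i}$ by a careful disconnected/connected analysis at each vertex. The passage to the connected versions $h_{g;x}^{*,\circ}$ is then obtained by restricting the Wick sum to contractions whose underlying graph has a connected source.
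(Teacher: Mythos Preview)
This theorem is not proved in the present paper: it is quoted from \cite{HL} as a preliminary result (note the citation in the theorem header), so there is no ``paper's own proof'' to compare against. Your outline is a reasonable high-level summary of the argument given in \cite{HL}: the bosonic Fock space expression for $h_{g;x}^{*,\bullet}$, the grouping of the $b$ operator insertions into clusters indexed by a composition $\lambda$, the Wick expansion producing the tropical graph, and the identification of each cluster's matrix element with the stationary relative Gromov--Witten invariant via the Okounkov--Pandharipande correspondence.

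One point to be careful about in your sketch: the ``floating'' factor $\langle\tau_{2g_2^i-2}(\omega)\rangle_{g_2^i}^{\mathbb{P}^1,\circ}$ does not come from a connected/disconnected distinction at the vertex in the usual sense, but rather from the specific way the (strictly) monotone operators decompose as sums of products of completed-cycle operators. In \cite{HL} the local operator attached to a cluster of size $\lambda_i$ is expressed as a linear combination of single completed cycles, and the coefficients in that expansion are precisely the one-point invariants $\langle\tau_{2g_2^i-2}(\omega)\rangle_{g_2^i}^{\mathbb{P}^1,\circ}$; this is where the sum over $g_1^i+g_2^i=g(v_i)$ genuinely originates. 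If you phrase it as ``disconnected theory at the vertex'' you risk obscuring this mechanism and getting the bookkeeping wrong. Otherwise your plan matches the intended argument.
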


In the following remark, we discuss the Gromov-Witten invariants appearing in the above vertex multiplicities.

\begin{remark}
It is well-known that
\begin{equation}
\left \langle \tau_{2l-2}(\omega) \right \rangle_{l,1}^{\mathbb{P}^1}=[z^{2l-1}]\frac{1}{\zeta(z)}=-\frac{2^{2l-1}-1}{2^{2l-1}}\frac{B_{2l}}{(2l)!},
\end{equation}
where $B_{2l}$ is the $2l-$th Bernoulli number. Furthermore, it was proved in \cite{OP2} that
\begin{equation}
\left\langle{y}^+,   \tau_{2g - 2 + \ell({y}^+) + \ell({y}^-)} , y^-  \right\rangle^{\mathbb{P}^1, \circ}_g
=
\frac{1}{|\Aut ({y}^+)||\Aut ({y}^-)|}[z^{2g}] \frac{\prod_{{y}^+_i} \mathcal{S}({y}_i z)\prod_{{x}^-_i} \mathcal{S}({x}_i z)}{\mathcal{S}(z)}.
\end{equation}
\end{remark}

\section{Piecewise polynomiality and Wall-crossings}
\label{sec:polwall}
We begin by defining a refinement of monotone and strictly monotone double Hurwitz numbers.

\begin{definition}
Let $g$ be a non-negative integer $x\in\mathbb{Z}^n$, such that $\left|x^+\right|=\left|x^-\right|$. Furthermore, let $\lambda'$ be an ordered partition of $2g-2+n$. Then we define
\begin{equation}
\vec{h}_{g;x,\lambda'}^{\le,\circ}=\sum_{\pi \in \Gamma( \mathbb{P}^1_{\text{trop}}, g; x,\lambda')}\frac{1}{|\mathrm{Aut}(\pi)|}\frac{1}{\ell(\lambda)!}\prod_{v \in V(\Gamma)} m_v \prod_{e \in E(\Gamma)} \omega_e.
\end{equation}

Furthermore, let $\lambda''$ be an unordered partition of $2g-2+n$. Then we define

\begin{equation}
h_{g;x,\lambda''}^{\le,\circ}=\sum_{\lambda}\vec{h}_{g;x,\lambda}^{\le,\circ}\,,
\end{equation}
where the first sum is over all ordered partitions $\lambda$ which are obtained by some ordering of $\lambda''$. Similarly, we define $\vec{h}_{g;x,\lambda'}^{<,\circ}$ and $h_{g;x,\lambda'}^{<,\circ}$. We further define their disconnected counterparts by considering disconnected tropical covers and decorate them with $\bullet$.
\end{definition} 

\begin{remark}
We observe that by definition
\begin{equation}
\label{equ:finer}
h_{g;x}^{\le,\circ}=\sum_{\lambda'}\vec{h}_{g;x,\lambda'}^{\le,\circ}=\sum_{\lambda''}h_{g;x,\lambda''}^{\le,\circ},
\end{equation}
where the first sum is taken over all ordered partition $\lambda'$ of $2g-2+n$ and the second sum is taken over all unordered partitions $\lambda''$ of $2g-2+n$.\par 
We note that these numbers naturally appear as weighted sums of vacuum expectations of products of the $\mathcal{G}_l$ operators in the notation of \cite{HL}.
\end{remark}

\subsection{Results}
In this section, we collect our results about the piecewise polynomial behaviour of $h_{g;x,\lambda}^{\le,\circ}$ and $h_{g;x,\lambda}^{<,\circ}$. We first define the \textit{resonance arrangement} which is the hyperplane arrangement in $\mathbb{R}^n$ given by
\begin{equation}
W_I=\left\{x\in\mathbb{Z}^n\mid\sum_{i\in I}x_i=0\right\}
\end{equation}
for all $I\subset\{1,\dots,n\}$. The connected components of the complement of the resonance arrangements are called \textit{chambers}. We also refer to them by $H-$\textit{chambers}.

\begin{theorem}
\label{thm:poly}
Let $g$ be a non-negative integer, fix the length $n$ of $x$ and let $\lambda$ be an unordered partition of $2g-2+n$. The function $h_{g;x,\lambda}^{\le,\circ}$ and $h_{g;x,\lambda}^{<,\circ}$ are polynomials of degree at most $4g-3+n$ in each chamber of the resonance arrangement.
\end{theorem}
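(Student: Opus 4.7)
The plan is to apply the tropical flow technique of Cavalieri--Johnson--Markwig \cite{CJMa} to the new tropical formula of \cref{thm:trop}, whose vertices carry Gromov--Witten multiplicities rather than the purely combinatorial vertex weights of the double Hurwitz case. I would fix a chamber $H$ of the resonance arrangement and decompose the sum defining $h_{g;x,\lambda}^{\le,\circ}$ (resp.\ $h_{g;x,\lambda}^{<,\circ}$) according to the combinatorial type of the source graph $\Gamma$, i.e.\ the underlying graph together with the vertex genera, the edge orientations in $\mathbb{P}^1_{\mathrm{trop}}$, and the matching of ends with the entries of $x$. Because $|V(\Gamma)| = \ell(\lambda)$ and $h^1(\Gamma) \le g$, only finitely many combinatorial types contribute.

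Fixing a combinatorial type, the balancing condition of \cref{def:tropmorph}(iv) at each interior vertex expresses the weight of every edge of a spanning tree of $\Gamma$ as $\pm \sum_{i \in I} x_i$ for some $I \subseteq \{1,\dots,n\}$, namely the set of ends lying on one side of that edge. These partial sums are exactly the linear functionals defining the resonance arrangement, so they are non-zero with fixed sign on $H$: the set of combinatorial types for which all edge weights are strictly positive is constant on $H$, and within it each tree-edge weight is an integer-linear function of $x$. For graphs with $h^1(\Gamma) > 0$, the $h^1(\Gamma)$ chordal edges carry free integer parameters $w \in \Z^{h^1(\Gamma)}$, and summing a fixed type's contribution over these parameters (within the positivity polytope cut out by $x$) yields, by the Ehrhart-theoretic argument of \cite{CJMa}, a polynomial in $x$ on $H$.

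For the degree bound, the Okounkov--Pandharipande identity recalled in the remark following \cref{thm:trop} expresses each Gromov--Witten invariant in $m_v$ as $[z^{2g(v)}]\prod_j \mathcal{S}(y_j z)/\mathcal{S}(z)$ (up to explicit constants), hence as a polynomial of total degree at most $2g(v)$ in the adjacent edge weights $y_j$. Consequently $\prod_v m_v \cdot \prod_e \omega_e$ is a polynomial in $(x,w)$ of degree at most
\[
\sum_v 2g(v) + |E(\Gamma)| \;=\; 2\bigl(g - h^1(\Gamma)\bigr) + h^1(\Gamma) + \ell(\lambda) - 1 \;=\; 2g + \ell(\lambda) - h^1(\Gamma) - 1.
\]
The Ehrhart summation over the $h^1(\Gamma)$-dimensional polytope of cycle parameters contributes at most $h^1(\Gamma)$ to the $x$-degree, giving total degree at most $2g + \ell(\lambda) - 1 \le 4g - 3 + n$, where the last step uses $\ell(\lambda) \le b = 2g - 2 + n$. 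Summing the polynomial contributions of the finitely many combinatorial types yields a polynomial on $H$ of the claimed degree. The strictly monotone case is identical, the sign $(-1)^{1+\mathrm{val}(v)}$ being a combinatorial invariant of the type.

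The hard part I anticipate is ensuring the cycle summation produces a genuine polynomial in $x$ rather than merely a quasi-polynomial. This requires a careful adaptation of the sign-coherence and flow-cancellation arguments of \cite{CJMa} to the setting in which vertex weights depend polynomially (through the $\mathcal{S}$-function expansion) on the edge weights, rather than being simple multiplicative factors of those weights. Once this step is established, the combinatorial architecture of the proof is unchanged from the classical double Hurwitz case.
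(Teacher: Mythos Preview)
Your approach is essentially the paper's: both apply the Cavalieri--Johnson--Markwig flow-polytope machinery to the tropical formula of \cref{thm:trop}, decomposing by underlying $x$-graph and summing the integrand over lattice points in the flow space, with the Gromov--Witten vertex multiplicities absorbed as polynomial factors in the edge weights. Your degree computation $2g+\ell(\lambda)-1\le 4g-3+n$ matches the paper's.

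The ``hard part'' you flag dissolves more easily than you anticipate, and the paper dispatches it with two observations you already have most of. First, the quasi-polynomiality concern is handled by the CJM argument unchanged: the vertices of each flow chamber are integral because the incidence matrix of a directed graph is totally unimodular, so Ehrhart summation of a polynomial over each chamber yields a genuine polynomial in $x$. This has nothing to do with the form of the vertex multiplicities and requires no adaptation. Second, the reason the CJM sign framework transfers verbatim is that $\mathcal{S}(z)$ is an even power series, so the Okounkov--Pandharipande formula makes each $m_v$ an \emph{even} polynomial in the adjacent edge weights; hence $\prod_v M(v)$ is invariant under reversing any edge orientation and sits as a single fixed polynomial factor in the integrand $\varphi_{\mathcal{A}}(f)\prod_v M(v)$ across all $F$-chambers (the paper records this in \cref{rem:vert}). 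With these two points in hand, no new sign-coherence or flow-cancellation argument is needed beyond what is already in \cite{CJMa}.
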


Combining \cref{thm:poly} and \cref{equ:finer}, we therefore obtain a new proof of the following result.

\begin{corollary}[{\cite{goulden2016toda,HKL}}]
For a non-negative integer $g$ and a fixed length $n$ of $x$, the functions $h_{g;x}^{\le,\circ}$ and $h_{g;x}^{<,\circ}$ are piecewise polynomial.
\end{corollary}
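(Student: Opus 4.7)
The plan is to adapt the tropical flow method of \cite{CJMa} to the weighted tropical cover interpretation given by \cref{thm:trop}. Fix $\lambda$ and partition the set $\Gamma(\mathbb{P}^1_{\text{trop}}, g; x, \lambda)$ according to the \emph{combinatorial type} $\Theta$ of the source graph --- that is, the labelled abstract graph recording the vertex genera $g(v_i)$, vertex valences, and the pattern of end-attachments, without the edge weights. For each fixed $\Theta$ the remaining data are the positive integer weights $\omega_e$ on the bounded edges, subject to the balancing condition at each vertex. This is a linear system whose inhomogeneous part is linear in $x^{\pm}$, so its integer solution space can be parametrised by $x$ together with $h^1(\Gamma)$ integer flow parameters $\alpha$, with each $\omega_e$ an explicit linear function $\omega_e(x,\alpha)$; the positivity constraints $\omega_e \ge 1$ cut out a rational polytope $P_\Theta(x) \subset \mathbb{R}^{h^1(\Gamma)}$.

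The closed formula of the final remark of \cref{sec:trop} shows that each vertex multiplicity $m_{v_i}$ is a polynomial in its adjacent edge weights of total degree at most $2g(v_i)$, since it arises as the $[z^{2g_1^i}]$-coefficient of a product of the entire functions $\mathcal{S}(y_\bullet^\pm z)$ divided by $\mathcal{S}(z)$. Substituting $\omega_e(x,\alpha)$ into the full tropical weight $\frac{1}{|\mathrm{Aut}(\pi)|\, \ell(\lambda)!}\prod_v m_v \prod_e \omega_e$ therefore yields a polynomial $F_\Theta(x,\alpha)$ in $(x,\alpha)$ of total degree at most $2(g-h^1(\Gamma)) + |E(\Gamma)|$, and
\[
h_{g;x,\lambda}^{\le,\circ} \;=\; \sum_\Theta \;\sum_{\alpha \in P_\Theta(x) \cap \mathbb{Z}^{h^1(\Gamma)}} F_\Theta(x, \alpha).
\]

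As $x$ varies, $P_\Theta(x)$ changes combinatorial type precisely when some $\omega_e$ vanishes, and this imposes $\sum_{i \in I} x_i = 0$ for the subset $I$ of ends lying on one side of the cut of $\Gamma$ through that edge --- that is, exactly on a wall of the resonance arrangement. Within each chamber the polytope is combinatorially constant with vertices depending linearly on $x$, so the weighted lattice-point sum is polynomial in $x$ of degree at most $\deg F_\Theta + h^1(\Gamma)$, by the standard vector-partition-function argument in \cite{CJMa}. The valence identity $\sum_i \mathrm{val}(v_i) = 2|E(\Gamma)| + n$ together with $\mathrm{val}(v_i) = \lambda_i + 2 - 2g(v_i)$ yields $|E(\Gamma)| = \ell(\lambda) - 1 + h^1(\Gamma)$, whence the total degree in $x$ is at most $2g + \ell(\lambda) - 1 \le 4g - 3 + n$ (using $\ell(\lambda) \le 2g-2+n$). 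The strictly monotone case $h_{g;x,\lambda}^{<,\circ}$ is handled identically, the vertex signs $(-1)^{1+\mathrm{val}(v)}$ being functions of $\Theta$ alone and so inert to the polytopal argument.

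The main obstacle --- and the reason we must invoke \cite{CJMa} rather than a naive Ehrhart appeal --- is promoting the \emph{quasi-polynomiality} that Ehrhart theory yields for lattice-point sums over rational polytopes to genuine \emph{polynomiality} within each chamber, and checking that the combinatorial walls of all the polytopes $P_\Theta(x)$, as $\Theta$ varies, assemble exactly into the resonance arrangement. Both facts are established in the classical setting in \cite{CJMa}, and transfer to the present case once the Gromov-Witten vertex weights are absorbed into the polynomial integrand $F_\Theta$.
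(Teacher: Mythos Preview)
Your proposal is correct and follows essentially the same approach as the paper: both adapt the flow-space method of \cite{CJMa} to the Gromov--Witten weighted tropical covers of \cref{thm:trop}, absorbing the vertex multiplicities $m_{v_i}$ into a polynomial integrand (via the $\mathcal{S}$-function formula, exactly as in the paper's \cref{rem:vert}), summing over lattice points in the flow polytope, and invoking total unimodularity from \cite{CJMa} for genuine polynomiality within each resonance chamber. One small imprecision worth tightening: the polytope $P_\Theta(x)$ changes combinatorial type not when a single $\omega_e$ vanishes but when an entire simple cut does (the paper handles this via the discriminant analysis of \cite[Lemma~3.8]{CJMa}); since you defer to \cite{CJMa} for this step anyway, the argument goes through.
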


This motivates the following definition.

\begin{definition}
Let $\mathfrak{c}_1,\mathfrak{c}_2$ be two $H-$\textit{chambers} adjacent along the wall $W_I$, with $\mathfrak{c}_1$ being the chamber with $x_I=\sum_{i\in I} x_i<0$. Let $P_i^{\lambda}(x)$ be the polynomial expressing $h_{g;x,\lambda}$ in $\mathfrak{c}_i$. We define the wall-crossing function by
\begin{equation}
WC_I^{\lambda}(x)=P^{\lambda}_2(x)-P^{\lambda}_1(x).
\end{equation}
\end{definition}

We derive the following expression of the wall-crossing function.

\begin{theorem}
\label{thm:wall}
Let $g$ be a non-negative integer, $n$ the fixed length of $x$ and $\lambda$ an unordered partition of $b=2g-2+n$. Then we have
\begin{align}
WC_I^{\lambda}(x)=\sum_{\substack{|y|=|z|=|x_I|}}\sum_{\substack{\lambda^i\textrm{unordered}\\\lambda^1\cup\lambda^2\cup\lambda^3=\lambda}}
&
\left((-1)^{\ell(\lambda^2)}\frac{\prod y_i}{\ell(y)!}\frac{\prod z_i}{\ell(z)!} h^{\le,\circ}_{g_1;(x_I,-y),\lambda^1}h^{\le,\bullet}_{g_2;(y,-z),\lambda^2}h^{\le,\circ}_{g_3;(z,x_{I^c}),\lambda^3}\right),
\end{align}
where $y$ (resp. $z$) is an ordered tuple of length $\ell(y)$ (resp. $\ell(z)$) of positive integers with sum $|y|$ (resp. $|z|$) and $g_1$ is given by $|\lambda^1|=2g_1-2+\ell((x_I,-y))$ (and analogously for $g_2,g_3)$).
\end{theorem}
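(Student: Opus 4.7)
The approach I would take adapts the tropical flow technique of \cite{CJMa} to the Gromov--Witten weighted enumeration provided by \cref{thm:trop}. Since each unordered invariant $h^{\le,\circ}_{g;x,\lambda}$ is a sum of the ordered invariants $\vec h^{\le,\circ}_{g;x,\lambda'}$ over orderings $\lambda'$ of $\lambda$, I would first establish an ordered version of the wall-crossing formula and then symmetrise. Fix an $H$-chamber $\mathfrak{c}$ and an ordered partition $\lambda'$ of $b=2g-2+n$. Once the combinatorial type $(\Gamma,\{g(v_i)\})$ of a cover in $\Gamma(\mathbb{P}^1_{\text{trop}},g;x,\lambda')$ is fixed, the orientation of every internal edge $e$ is forced by $\sgn(x_{I_e})$, where $I_e\subset\{1,\dots,n\}$ is the subset of external ends separated by $e$ from the rest. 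The integer edge weights $\omega_e$ then range over the lattice points of a rational polytope cut out by the balancing conditions at the vertices, and Ehrhart theory delivers polynomiality of each contribution on the interior of $\mathfrak{c}$.

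For the wall-crossing across $W_I$, write $\mathfrak{c}_1$ (respectively $\mathfrak{c}_2$) for the chamber with $x_I<0$ (respectively $x_I>0$). The only internal edges whose forced orientation changes are the \emph{$I$-flipping edges}, namely those $e$ with $I_e\in\{I,I^c\}$. Combinatorial types with no $I$-flipping edge contribute identical polynomials in both chambers and thus cancel in $P_2^{\lambda}-P_1^{\lambda}$. For the remaining types I would perform a cut-and-paste along the flipping edges exactly as in \cite[Theorem 3.3]{CJMa}: the resulting subgraphs split into three parts $\Gamma^1,\Gamma^2,\Gamma^3$, with $\Gamma^1$ retaining the ends in $I$, $\Gamma^3$ retaining the ends in $I^c$, and $\Gamma^2$ in the middle, together with tuples of cut-edge weights $y$ on the $\Gamma^1/\Gamma^2$ boundary and $z$ on the $\Gamma^2/\Gamma^3$ boundary, both satisfying $|y|=|z|=|x_I|$. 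The vertex multiplicities $m_v$ and edge weight factors $\omega_e$ are multiplicative across the three pieces, $\lambda'$ splits accordingly as $\lambda^1\sqcup\lambda^2\sqcup\lambda^3$, and summing over genera and types each piece assembles into a monotone Hurwitz count: the outer pieces $\Gamma^1,\Gamma^3$ contribute the connected invariants $h^{\le,\circ}_{g_1;(x_I,-y),\lambda^1}$ and $h^{\le,\circ}_{g_3;(z,x_{I^c}),\lambda^3}$, while $\Gamma^2$ contributes the disconnected $h^{\le,\bullet}_{g_2;(y,-z),\lambda^2}$ (since the middle can split into several cylinders). The combinatorial prefactors $\prod y_i$ and $\prod z_i$ arise from the weights $\omega_e$ of the cut edges, now free parameters in the summation, and the denominators $\ell(y)!,\ell(z)!$ compensate for summing over ordered rather than unordered cut tuples.

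The main obstacle is establishing the sign $(-1)^{\ell(\lambda^2)}$. I would track it by carefully comparing the vertex multiplicities $m_v$ for $v\in V(\Gamma^2)$ in the two chambers: the split of edges incident to each middle vertex into leftward- and rightward-pointing tuples $y^+_v,y^-_v$ is swapped when the $I$-flipping edges reverse, so the multiplicity of \cref{thm:trop} read from the ambient cover must be compared with the multiplicity obtained by regarding $\Gamma^2$ as a standalone cover in its own $\Gamma^2$-local orientation. My expectation, to be verified by a direct computation using the explicit formula for $\langle y^+,\tau_{2g-2+n}(\omega),y^-\rangle^{\mathbb{P}^1,\circ}_g$ in terms of $\mathcal{S}(z)$, is that these two multiplicities agree up to a single sign per middle vertex, so that their product over $V(\Gamma^2)$ produces $(-1)^{|V(\Gamma^2)|}=(-1)^{\ell(\lambda^2)}$. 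An alternative route — which I would pursue in parallel — is to read the sign off from the Fock space interpretation of \cite{HL}, where each middle vertex is a $\mathcal{G}_{l}$-operator insertion whose commutation past the bosonic mode implementing the cut contributes an explicit sign. Symmetrising the resulting ordered wall-crossing identity over shuffles of $(\lambda^1,\lambda^2,\lambda^3)$ then yields the stated formula for the unordered refinements.
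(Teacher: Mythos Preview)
Your outline has two genuine gaps. First, the claim that ``the orientation of every internal edge $e$ is forced by $\sgn(x_{I_e})$'' is only valid for bridge edges: when $b_1(\Gamma)>0$ an edge lying on a cycle does not separate the ends, so no subset $I_e$ is attached to it, and its orientation is genuinely free within a $b_1(\Gamma)$--dimensional flow space. Consequently the notion of ``$I$--flipping edges'' as you define it does not capture what actually changes across the wall; what changes is the combinatorial type of the hyperplane arrangement $\mathcal{A}_\Gamma(x)$ inside $F_\Gamma(x)$, and the relevant combinatorics are governed by $I$--cuts of arbitrary rank rather than by single edges. The reference to \cite[Theorem~3.3]{CJMa} is the genus~$0$ statement; for higher genus one needs the machinery of Sections~6--8 of \cite{CJMa}.

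Second, your proposed source for the sign $(-1)^{\ell(\lambda^2)}$ does not work. By the explicit $\mathcal{S}$--function formula the one--point invariant satisfies $\langle y^+,\tau_k(\omega),y^-\rangle^{\mathbb{P}^1,\circ}=\langle y^-,\tau_k(\omega),y^+\rangle^{\mathbb{P}^1,\circ}$, and after multiplying by $|\Aut(y^+)||\Aut(y^-)|$ the vertex multiplicity $m_v$ is an \emph{even} polynomial in the adjacent edge weights (this is precisely the content of \cref{rem:vert}). Hence swapping the in/out r\^ole of edges at a middle vertex produces no sign at all, and the product over $V(\Gamma^2)$ cannot manufacture $(-1)^{\ell(\lambda^2)}$. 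In the paper the sign arises from a completely different mechanism: one first expresses the wall--crossing as an alternating sum over \emph{all} $I$--cuts $C$ weighted by $(-1)^{\mathrm{rk}(C)-1}$ together with a multinomial counting vertex orderings, and then an inclusion--exclusion identity (\cite[Lemma~8.2]{CJMa}) collapses this to a sum over \emph{thin} cuts with coefficient $(-1)^{t}\binom{\ell(\lambda)}{s,t,u}$, where $t=\ell(\lambda^2)$ is the number of inner vertices in the middle piece. The multinomial then cancels against the ordering factors when the three pieces are reassembled into $\lambda$--invariants. Without this cut/inclusion--exclusion step your argument has no way to produce the correct sign or to handle graphs with positive first Betti number.
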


\section{Proofs of chamber polynomiality and of wall crossing formulae}
\label{sec:proof}
In this section, we prove \cref{thm:poly} and \cref{thm:wall}. We focus on the case of monotone Hurwitz numbers as the other case is completely parallel. To begin with, we introduce a formal set-up for the proofs of both theorems in \cref{sec:setup}. We continue in \cref{sec:poly} where we prove \cref{thm:poly}. Finally, we prove \cref{thm:wall} in \cref{sec:wall}. We follow the strategy of \cite{CJMa} which focuses on the case of trivalent graphs, however all results we cite hold for the graphs with higher valency considered in this paper with the same proofs. We also provide a running example for this case of higher valency throughout the proof, which is analogous to example 2.5 in \cite{CJMa} for the trivalent case.

\subsection{Formal set-up}
\label{sec:setup}
Instead of tropical covers, we work with combinatorial covers, where the information given by the cover is encoded as an orientation given on the graph.

\begin{definition}[Combinatorial cover] \label{def:monograph}
For fixed $g$, $x=(x_1,\dots,x_n)\in(\mathbb{Z}\backslash\{0\})^n$, $\lambda\vdash2g-2+n$ unordered, a graph $\Gamma$ is a \textit{combinatorial cover} of type $(g,x,\lambda)$, if
\begin{enumerate}
\item $\Gamma$ is a connected graph with at most $2g-2+2n$ vertices;
\item $\Gamma$ has $n$ many $1-$valent vertices called \textit{leaves}; the adjacent edges are called \textit{ends} and are labeled by the weights $x_1,\dots,x_n$; further, all ends are oriented inwards. If $x_i>0$, we say it is an \textit{in-end}, otherwise it is an out-end;
\item we denote the set of edges which are \textit{not} edges by $E^{in}(\Gamma)$;
\item there are $\ell(\lambda)$ inner vertices;
\item we denote the inner vertices by $v_1,\dots,v_{\ell(\lambda)}$ and assign a non-negative integer $g(v_i)$ to $v_i$ which we call the \textit{genus of $v_i$}; we further have $\lambda_i=\val(v_i)+2g(v_i)-2$;
\item after reversing the orientation of the \textit{out-ends}, $\Gamma$ does not have sinks or sources;
\item the internal vertices are ordered compatibly with the partial ordering induced by the directions of the edges;
\item we have $g=b_1(\Gamma)+\sum g(v_i)$, where $b_1(\Gamma)$ is the first Betti number of $\Gamma$;
\item every internal edge $e$ of the graph is equipped with a weight $\omega(e)\in\mathbb{N}$. The weights satisfy the balancing condition a each inner vertex: the sum of all weights of incoming edges equals the sum of the weights of outgoing edges.
\end{enumerate}
The notation $\Gamma(x,\lambda,d,o)$ indicates that graph comes with directed edges $(d)$ and with a compatible vertex ordering $(o)$.
\end{definition}

Then \cref{thm:trop} translates to
\begin{equation}
\label{equ:hur}
h_{g; x}^{\leq, \circ}=\sum _{\lambda\vdash b}\sum_{\Gamma}\frac{1}{|\mathrm{Aut}(\Gamma)|}\frac{1}{\ell(\lambda)!}\varphi_{\Gamma},
\end{equation}
where the second sum is over all combinatorial covers $\Gamma$ of type $(g,x,\lambda)$ and we have
\begin{equation}
\varphi_{\Gamma}=\prod_{i=1}^{\ell(\lambda)}m_{v_i}\prod_{e\in E^{in}(\Gamma)}\omega(e)
\end{equation}
with 
\begin{align}
m_{v_i} = (\lambda_i-1)!|\mathrm{Aut}(y^{+})||\mathrm{Aut}(y^{-})|\sum_{g_1^i+g_2^i=g(v_i)}\cor{ \!\! \tau_{2g^i_2 - 2}(\omega) \!\! }_{g^i_2}^{\mathbb{P}^1, \circ} \cor{ \!\! y^+,   \tau_{2g^i_1 - 2 + \ell(y^+) + \ell(\textbf{y}^-)}(\omega) , y^-\!\!  }^{\mathbb{P}^1,\circ}_{g^i_1}
\end{align}
where $y^+$ is the tuple of weights of in-coming edges and $y^-$ the tuple of weights of outgoing edges at $v_i$. Analogously, one obtains $h_{g;x,\lambda}^{\le,\circ}$, $\vec{h}_{g;x,\lambda}^{\le,\circ}$ and their disconnected counterparts.\par 
Moreover, for an unordered partition $\lambda$, we have
\begin{equation}
h_{g; x,\lambda}^{\leq, \circ}=\sum_{\Gamma}\frac{1}{|\mathrm{Aut}(\Gamma)|}\frac{1}{\ell(\lambda)!}\varphi_{\Gamma}.
\end{equation}
where the second sum is over all combinatorial covers $\Gamma$ of type $(g,x,\lambda)$.

\begin{definition}
Given $g$ and $x$, an $x-$graph $\Gamma(x)$ (or simply $\Gamma$ when there is no risk of confusion) is a connected, genus $g$ combinatorial cover, where we forget the direction of the edges and the vertex ordering, such that the $n$ ends are labeled $x_1,\dots,x_n$.
\end{definition}

\subsubsection{Hyperplane arrangements}
We view an $x-$graph $\Gamma$ as a one-dimensional cell complex. The differential $d:\R E_{\Gamma}\to\R V_{\Gamma}$, sending a directed edge to the difference of its head and tail vertices, yields the following short exact sequence
\begin{equation}
0\to\mathrm{ker}(d)\to\R E_{\Gamma}\to \mathrm{im}(d)\to 0.
\end{equation}
We decompose $\R E_{\Gamma}=\R^n\bigoplus\R^{|E^{in}(\Gamma)|}$ into ends and internal vertices. Then we have a vector of the form $(x,0)\in\mathrm{im}(d)$ when $\sum x_i=0$.

\begin{definition}
We define the \textit{space of flows} to be
\begin{equation}
F_{\Gamma}(x)=d^{-1}(x,0).
\end{equation}
Inside the space of flows, we define a hyperplane arrangement
\begin{equation}
\mathcal{A}_{\Gamma}(x)
\end{equation}
given by the restriction of the coordinate hyperplanes corresponding to the internal edges in $\R E_{\Gamma}$. The defining polynomial for this hyperplane arrangement is
\begin{equation}
\varphi_{\mathcal{A}}=\prod e_i,
\end{equation}
where $e_i$ are the coordinate functions on $\R E_{\Gamma}$ restricted to $F_{\Gamma}(x)$.
\end{definition}

We note that often it is useful to fix a reference orientation on a given $x-$graph. The following lemma shows that this corresponds to fixing a bounded chamber in the hyperplane arrangement.

\begin{lemma}[{\cite[Lemma 2.13, Corollary 2.14]{CJMa}}]
The bounded chambers of $\mathcal{A}_{\Gamma}(x)$ correspond to orientations of $\Gamma$ with no directed cycles. Moreover, given an $(x,\lambda)-$ graph $\Gamma$, the bounded chambers of $\mathcal{A}_{\Gamma}(x)$ are in bijection with directed $(x,\lambda)-$graphs projecting to $\Gamma$ after forgetting the orientations of the edges that come from a combinatorial cover (defined in \ref{def:monograph}).  \qed
\end{lemma}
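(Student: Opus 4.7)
The plan is to set up a direct combinatorial encoding of each chamber by a sign vector, and then read both statements off as properties of that encoding. Concretely, each chamber $C$ of $\mathcal{A}_\Gamma(x)$ is a maximal connected open subset of $F_\Gamma(x)$ on which every internal-edge coordinate $e_i$ has a fixed sign, so $C$ records a sign vector $\sigma_C \in \{+,-\}^{E^{in}(\Gamma)}$. I would turn $\sigma_C$ into an orientation $\vec{\Gamma}_C$ of the internal edges of $\Gamma$ by agreeing with the reference orientation when $\sigma_C(e_i)=+$ and reversing it otherwise. This sign-to-orientation assignment is the candidate bijection, and the two claims of the lemma then amount to characterising when $C$ is bounded and when $\vec{\Gamma}_C$ is the orientation of an honest combinatorial cover.

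For the first claim I would argue via recession cones. The affine space $F_\Gamma(x)=d^{-1}(x,0)$ is a translate of $\ker(d)$, and the standard identification of $\ker(d)$ with the cycle space $H_1(\Gamma;\mathbb{R})$ lets me interpret its elements as $1$-cycles on $\Gamma$. The recession cone $\mathrm{rec}(C)$ consists of those $c \in \ker(d)$ whose coordinates obey the same sign constraints as $C$, i.e.\ $1$-cycles every edge of which is traversed in the direction prescribed by $\vec{\Gamma}_C$. Such cycles are precisely nonnegative linear combinations of directed cycles in $\vec{\Gamma}_C$, so $\mathrm{rec}(C)=\{0\}$ --- equivalently $C$ is bounded --- if and only if $\vec{\Gamma}_C$ has no directed cycle.

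For the second claim I would verify both directions of the bijection. In one direction, any bounded chamber $C$ contains a point $e$ with all internal coordinates strictly positive and satisfying $d(e)=(x,0)$; together with the convention that in-ends carry positive weights and out-ends negative ones, balancing forces every internal vertex to have both an incoming and an outgoing edge, giving condition (6) of the combinatorial cover definition (no sinks or sources after reversing out-ends), while the acyclicity already established delivers condition (7) (existence of a compatible vertex ordering). In the other direction, any combinatorial cover orientation $\vec{\Gamma}$ gives sign constraints defining a chamber that one must show is nonempty; I would handle this by induction on the linear extension of $\vec{\Gamma}$, using that every vertex which is minimal at some stage has incoming edges from either in-ends or already-processed edges (by the no-source condition after reversing out-ends) and can push a positive flow out to its successors respecting balancing.

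The main obstacle is this nonemptiness step: producing a strictly positive flow with prescribed boundary data from the purely combinatorial data of an acyclic orientation without sinks or sources. Once this flow-existence lemma is established --- essentially a max-flow/Hall-type statement on a DAG with prescribed boundary --- the rest of the proof is a routine reading off of the sign-vector encoding from the linear algebra of the short exact sequence defining $F_\Gamma(x)$.
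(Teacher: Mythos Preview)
The paper does not prove this lemma at all: it is stated with a \verb|\qed| and attributed to \cite[Lemma~2.13, Corollary~2.14]{CJMa}, so there is no ``paper's own proof'' to compare against. Your outline is essentially the standard argument and is correct.

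One remark on what you call the ``main obstacle''. In the direction \emph{combinatorial cover orientation $\Rightarrow$ nonempty bounded chamber}, you propose to manufacture a strictly positive flow by an inductive/Hall-type argument along a linear extension. This is unnecessary: by Definition~\ref{def:monograph}, a combinatorial cover already carries edge weights $\omega(e)\in\mathbb{N}$ satisfying the balancing condition, so the cover itself \emph{is} a point of $F_\Gamma(x)$ lying in the open region cut out by the corresponding sign vector. Nonemptiness is therefore tautological, and boundedness follows from your recession-cone argument since the orientation is acyclic by condition~(7). The only genuine content is the converse direction (bounded chamber $\Rightarrow$ combinatorial cover orientation), and there your argument is fine: any interior point gives strictly positive internal weights, balancing forces the no-sink/no-source condition~(6), and acyclicity gives the compatible ordering~(7). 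So the proof is shorter than you anticipate, and the flow-existence lemma you flag as the crux is not needed.
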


The following remark indicates an interesting structural result regarding the vertex contributions.

\begin{remark}
\label{rem:vert}
Recall that the contribution of each vertex is given by
\begin{align}
m_{v_i} = (\lambda_i-1)!|\mathrm{Aut}(y^{+})||\mathrm{Aut}(y^{-})|\sum_{g_1^i+g_2^i=g(v_i)}\cor{ \!\! \tau_{2g^i_2 - 2}(\omega) \!\! }_{g^i_2}^{\mathbb{P}^1, \circ} \cor{ \!\!y^+,   \tau_{2g^i_1 - 2 + \ell(y^+) + \ell(y^-)}(\omega) , y^-\!\!  }^{\mathbb{P}^1,\circ}_{g^i_1},
\end{align}
where $y^+$ are the incoming and $y^-$ are the outgoing edge weights. Moreover, by \cite[Theorem 2]{OP} the following identity holds
\begin{equation}
\cor{ \!\!y^+,   \tau_{2g^i_1 - 2 + \ell(y^+) + \ell(y^-)}(\omega) , y^-\!\!  }^{\mathbb{P}^1,\circ}_{g^i_1}=\frac{1}{|\mathrm{Aut}(y^{+})|}\frac{1}{|\mathrm{Aut}(y^{-})|}[w^{g_1^i}]\frac{\prod_{y^+}\mathcal{S}(y^+_iw)\prod_{y^-}\mathcal{S}(y^-_iw)}{\mathcal{S}(w)}.
\end{equation}
Thus we obtain
\begin{align}
m_{v_i} = (\lambda_i-1)!\sum_{g_1^i+g_2^i=g(v_i)}\cor{ \!\! \tau_{2g^i_2 - 2}(\omega) \!\! }_{g^i_2}^{\mathbb{P}^1, \circ} [w^{g_1^i}]\frac{\prod_{y^+}\mathcal{S}(y^+_iw)\prod_{y^-}\mathcal{S}(y^-_iw)}{\mathcal{S}(w)}.
\end{align}
We recall that $\mathcal{S}(w)=1+\frac{z^2}{24}+\frac{z^4}{1920}+O(z^6)$ and $\frac{1}{\mathcal{S}(w)}=1-\frac{z^2}{24}+\frac{7z^4}{5760}+O(z^6)$ are even power series. Therefore $m_{v_i}$ is a polynomial in the adjacent edge weights and all appearing monomials are of even degree. We denote this polynomial by $M(v_i)$. This polynomial is independent of the flow of the respective branching graph.
\end{remark}

\begin{definition}
Let $\Gamma$ be an $x-$graph. We denote by $S_{\Gamma}(x)$ the contribution to $h_{g;x,\lambda}$ of all combinatorial covers having underlying $(x)-$graph $\Gamma$, where $\lambda$ is obtained by 
$$
\lambda_i=\mathrm{val}(v_i)+2g(v_i)-2,
$$
where $v_i$ runs over all inner vertices, i.e. $\lambda=(\lambda_1,\dots,\lambda_{\ell(\lambda)})$.\par 
For a given $(x)-$graph $\Gamma$, we call $F-$chambers the chambers of $\mathcal{A}_{\Gamma}(x)$ in the flow space $F_{\Gamma}(x)$. Recall that all points in the same $F$-chamber $A$ have edge weights with the same sign (i.e. their edges have the same orientation). Crossing a wall towards a different chamber in a certain direction means moving in the flow space $F_{\Gamma}(x)$ along the direction $e_i \rightarrow 0 $ (say, in the chamber $A$ we have $e_i >0$ ), where $e_i$ is the coordinate that represents the weight of some edge in the decomposition $\R V_{\Gamma}=\R^n\bigoplus\R^{|E(\Gamma)|}$.
After hitting the wall defined by $e_i = 0$, the adjacent chamber has all coordinates $e_j$ with same sign as in the chamber $A$, for $j \neq i$, and instead $e_i <0$. Each point of this chamber corresponds therefore to an oriented graph in which the edge corresponding to $e_i$ has opposite orientation with respect to the one in chamber $A$.\par
For an $F-$chamber $A$, let $\Gamma_A$ denote the directed $(x,\lambda)$-graph with the edge directions corresponding to the chamber $A$. 
We use $m(A)$ (or $m(\Gamma_A)$), to denote the number of all possible orderings of the vertices of $\Gamma_A$ from left to right (recall that the branch points are fixed over the base).
\begin{lemma}\cite{CJMa} For an $F$-chamber $A$, we have that $m(A)$ is zero if and only if $A$ is unbounded.
\end{lemma}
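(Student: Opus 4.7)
The plan is to reduce the claim to the standard equivalence between acyclicity of a finite directed graph and the existence of a topological sort of its vertices, combined with the characterization of bounded $F$-chambers recalled immediately before the statement.

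First I would unpack the quantity $m(A)$. By \cref{def:monograph}(7), a vertex ordering is required to be compatible with the partial preorder $\prec$ on inner vertices obtained from the reachability relation in $\Gamma_A$: that is, $v_i \prec v_j$ whenever there is a directed path from $v_i$ to $v_j$ in $\Gamma_A$. Since the leaves of $\Gamma$ are univalent and the ends are oriented according to the sign of $x_i$, no directed path between two inner vertices can pass through a leaf; hence only internal edges contribute to $\prec$. Thus $m(A)$ is exactly the number of linear extensions of $\prec$ on the $\ell(\lambda)$ inner vertices.

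The core step is then a purely combinatorial fact: the reachability preorder of a finite digraph is antisymmetric (hence a genuine partial order admitting at least one linear extension) if and only if the digraph contains no directed cycle. Any directed cycle $v_{i_1} \to v_{i_2} \to \cdots \to v_{i_k} \to v_{i_1}$ would force $i_1 < i_2 < \cdots < i_k < i_1$, a contradiction, so $m(A)=0$ in the presence of a cycle; conversely, the standard iterative source-removal argument (Kahn's algorithm) produces a valid ordering of any finite acyclic digraph, giving $m(A)\ge 1$. Combining this with \cite[Lemma 2.13, Corollary 2.14]{CJMa}, which identifies bounded chambers of $\mathcal{A}_\Gamma(x)$ with those orientations whose associated directed graph has no directed cycle, yields $m(A)\ge 1$ exactly on bounded chambers and $m(A)=0$ exactly on unbounded ones.

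There is essentially no serious obstacle here beyond making the dictionary between chamber, orientation, and induced partial order fully explicit. The only small care needed is to check that the inwards orientation imposed on the ends in \cref{def:monograph}(2) and the convention of reversing the out-ends in \cref{def:monograph}(6) do not introduce extra constraints on $\prec$ between inner vertices; but as observed above, leaves are univalent and thus inert for the topological-sort question, so the reduction to the classical DAG statement is clean.
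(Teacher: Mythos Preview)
Your argument is correct and matches the paper's approach: the paper does not give a full proof but only the one-sentence sketch ``a chamber $A$ can be unbounded if and only if the graphs contain an oriented loop which makes it impossible to order the vertices over the base,'' which is precisely the two ingredients you spell out (bounded $\Leftrightarrow$ acyclic via \cite[Lemma~2.13]{CJMa}, and acyclic $\Leftrightarrow$ admits a topological sort). Your treatment is simply a rigorous expansion of that sketch.
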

Roughly speaking, the reason for the above statement is that a chamber $A$ can be unbounded if and only if the graphs contain an oriented loop which makes it impossible to order the vertices over the base. As $m(A)$ will appear as multiplicity in our formula, we can immediately discard all unbounded chambers, as their contribution vanishes completely.\par
We use $\mathrm{Ch}(\mathcal{A}_{\Gamma}(x))$ to denote the set of $F$-chambers of $\mathcal{A}_{\Gamma}(x)$.  
Clearly, the sign of 
$$
\varphi_{\mathcal{A}} = \prod^{n + |E(\Gamma)|}_{i=1} e_i
$$ 
alternates on adjacent $F-$chambers (since we swap the direction of one edge, as explained above): we indicate with
 $\mathrm{sign}(A) = (-1)^{N(A)}$
  the sign of $\varphi_{\mathcal{A}}$ on the chamber $A$, where $N(A)$ is the number of negative coordinates $e_i$ in the chamber $A$.\par 
For integer values of $x$, the space of flows $F_{\Gamma}(x)$ has an affine lattice, coming from the integral structure of $\Z E_{\Gamma}$. We denote this lattice by
\begin{equation}
\Lambda=F_{\Gamma}(x)\cap\Z E_{\Gamma}.
\end{equation}
This notation allows a convenient interpretation of $S_{\Gamma}(x)$ in terms of the hyperplane arrangement $\mathcal{A}_{\Gamma}(x)$. Choices of the weights of the edges -- i.e. the choice of a flow $f$ on $\Gamma$ - correspond to lattice points in $\Lambda$. We have that
\begin{align}
\label{equ:smult}
S_{\Gamma}(x)&= \frac{1}{|\mathrm{Aut}(\Gamma)|}\sum_{A\in\mathrm{Ch}(\mathcal{A}_{\Gamma}(x))} m(A) \sum_{f\in A\cap\Lambda}\left(\prod_{e \in E'(\Gamma)} w(e) \prod_iM(v_i)\right),
\\
\nonumber
&=\frac{1}{|\mathrm{Aut}(\Gamma)|}\sum_{A\in\mathrm{Ch}(\mathcal{A}_{\Gamma}(x))}\mathrm{sign}(A)\, m(A)\sum_{f\in A\cap\Lambda}\left(\varphi_{\mathcal{A}}(f)\prod_iM(v_i)\right),
\end{align}
 where $\prod_iM(v_i)$ is an even polynomial in the edge weights by \cref{rem:vert}, and to pass from the first to the second line use that the product of all the edge weights of a flow $f$ is the absolute value of $\varphi_{\mathcal{A}}$ computed at $f = (e_i)_i$ which if $f\in A$ is simply $\mathrm{sign}(A)\varphi_{\mathcal{A}}(f)$.
\end{definition}

\begin{example}
We illustrate the introduced notions for the combinatorial cover $\Gamma(x,\lambda,d,o)$ in the top of \cref{ex:flsp}, where $o$ is indicated in the left picture, $d$ is indicated by the directed edges, $\lambda=(1,1,1,1,5,2)$ and $x=(x_1,\dots,x_5)$.\par 
In the middle of \cref{ex:flsp}, two flow spaces for $\Gamma$ are given. On the left, we have $-(x_4+x_5)>x_2$ and $x_1+x_3>0$. On the right, we have crossed the wall $x_1+x_3=0$.\par 
We further have $M(v_i)=1$ for $i\neq 5$ and
\begin{equation}
M(v_5)=\frac{3a^4+10a^2(b^2+c^2)+3b^4+10b^2c^2+3c^4}{5760}
\end{equation}
for $a=i, b=-j-(x_4+x_5),c=-i-j-(x_4+x_5)$.
\end{example}

\begin{figure}
\scalebox{0.6}{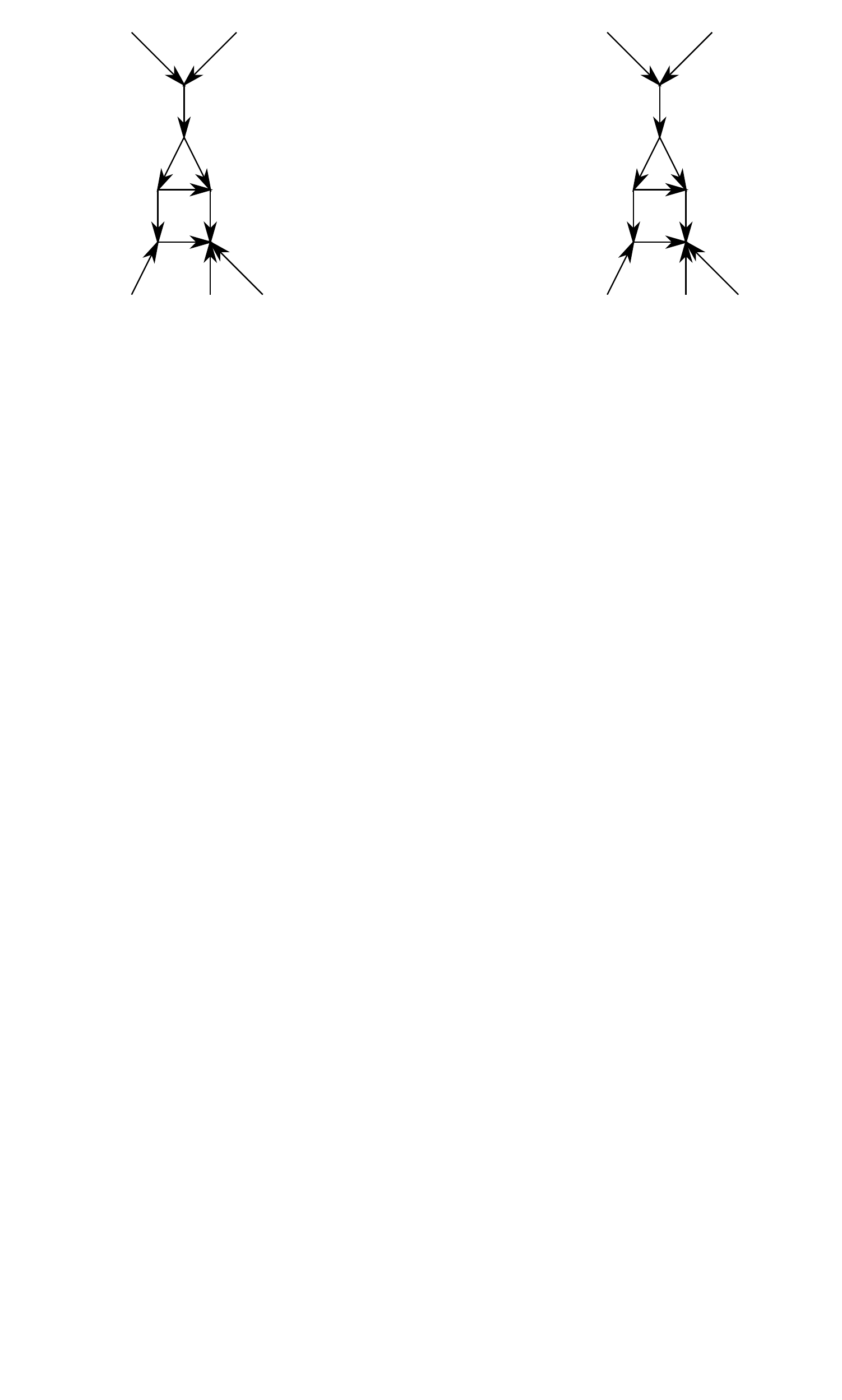}
\caption{A combinatorial cover, its flow space in two adjacent chambers and the corresponding orientations.}
\label{ex:flsp}
\end{figure}

\subsection{Polynomials and walls}
\label{sec:poly}
We begin with the proof of \cref{thm:poly}. We fix an $(x)-$graph $\Gamma$ with reference orientation given by the flow $f$. We first observe that
\begin{equation}
\frac{1}{|\mathrm{Aut}(\Gamma)|}\left(\varphi_{\mathcal{A}}(f)\prod_iM(v_i)\right)
\end{equation}
is a polynomial of degree $|E(\Gamma)|+2\sum g_i$, as $\varphi_{\mathcal{A}(f)}$ is a polynomial of degree $|E(\Gamma)|$ and $M(v_i)$ is a polynomial of degree $2g_i$. Considering the Euler characteristic of $\Gamma$ we obtain
\begin{equation}
|E(\Gamma)|=\ell(\lambda)+b_1(\Gamma)-1=\ell(\lambda)+g-\sum g_i-1
\end{equation}
and therefore
\begin{equation}
|E(\Gamma)|+2\sum g_i=\ell(\lambda)+g+\sum g_i-1.
\end{equation}
Recalling $\lambda_i=\mathrm{val}(v_i)+2g(v_i)-2$ and the fact that $\mathrm{val}(v_i)\ge 2$, it is easily seen that the right hand side maximizes for $\lambda=(1,\dots,1)$. Thus, we have
\begin{equation}
|E(\Gamma)|+2\sum g_i\le 3g-3+n.
\end{equation}
Similar to \cite[Remark 2.11]{CJMa}, we have that $F_{\Gamma}(x)$ is $b_1(\Gamma)-$dimensional.\par
Moreover, it is well-known that summing a polynomial of degree $d$ over the lattice points in a $b_1(\Gamma)-$dimensional integral polytope of fixed topology is a polynomial of degree $d+b_1(\Gamma)$ in the numbers defining the boundary of the polytope. We further observe that each vertex is given by an integer vector because the incidence matrix of a directed graph is totally unimodular.\par 
Combining these facts, it follows that $S_{\Gamma}(x)$ is a polynomial in $x$ of degree $\ell(\lambda)+g+\sum g_i-1+b_1(\Gamma)$ as long as varying $x$ does not change the topology of $\mathcal{A}_{\Gamma}(x)$ which is maximal for $\ell(\lambda)=b$ and $g_i=0$.\par 
Thus $h_{g;x,\lambda}$ is piecewise polynomial of maximal degree $4g-3+n$. We now determine the areas in which $h_{g;x,\lambda}$ is polynomial. More precisely, we prove that $h_{g;x,\lambda}$ is polynomial in each top-dimensional component in the complement of a hyperplane arrangement. We further compute those hyperplanes.\par 
We note that the hyperplane arrangement given by $\mathcal{A}_{\Gamma}(x)$ is not always given by hyperplanes which only intersect transversally. Morally, the shape of the polynomial expressing $h_{g;x,\lambda}$ should only change when the topology of $\mathcal{A}_{\Gamma}(x)$ changes. When translating generic hyperplane arrangements the topology changes when one passes through a non-transversality. However, in our situation, there can be nontransversalities which appear for each value of $x$. Nonetheless, it is still true that the topology changes once one passes through additional nontransversalities. We call those nontransversalities which appear for any value $x$ \textit{good transversalities}. The following definition is a classification of these.

\begin{definition}
Suppose a set of $k$ hyperplanes (equivalently, edges in $\Gamma$) in $\mathcal{A}_{\Gamma}(x)$ intersect in codimension $k-l$. We call this intersection \textit{good} if there is a set $L$ of $l$ vertices in $\Gamma$ so that $I$ is precisely the set of edges incident to vertices in $L$.\par 
Furthermore, we define the \textit{discriminant locus} $\mathcal{D}\subset \R^n$ the set of values of $x$ so that for some directed $(x)-$ graph $\Gamma$ the hyperplane arrangement $\mathcal{A}_{\Gamma}(x)$ has a nontransverse intersection that is not good. The discriminant is a union of hyperplanes which we call the \textit{discriminant arrangement}. We call these hyperplanes \textit{walls} and the chambers defined by the arrangement $H-$\textit{chambers}.
\end{definition}

The $H-$chambers are the chambers of polynomiality of $h_{g;x,\lambda}$. Now, we establish that the walls correspond to the resonance arrangements
\begin{equation}
\sum_{i\in I}x_i=0
\end{equation}
for $I\subset\{1,\dots,n\}$. We begin with the following definition.

\begin{definition}
A \textit{simple cut} of a graph $\Gamma$ is a minimal set $C$ of edges that disconnects the ends of $\Gamma$: There are two ends of $\Gamma$ such that every path between them contains an edge of $C$ and this is true of no proper subset of $C$.\par 
For an $(x,\lambda)-$graph, a flow in $F_{\Gamma}(x)$ is \textit{disconnected} if for some simple cut $C$ the flow on each edge of $C$ is zero.
\end{definition}

This yields the following lemma.

\begin{lemma}[{\cite[Lemma 3.8]{CJMa}}]
The discriminant arrangement $\mathcal{D}$ is given by the set of $x\in\R^n$ such that for some $x-$graph $\Gamma$, the space $F_{\Gamma}(x)$ admits a disconnected flow.
\end{lemma}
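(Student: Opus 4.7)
The strategy is to prove the set equality by a double inclusion between $\mathcal{D}$ and the locus where disconnected flows exist.

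For the inclusion ($\supseteq$), suppose an $x$-graph $\Gamma$ admits a disconnected flow $f \in F_\Gamma(x)$ via a simple cut $C$, so that $f|_C = 0$. I would examine the intersection $\bigcap_{e \in C}\{e = 0\}$ inside $F_\Gamma(x)$. Because $C$ is simple, it separates the ends of $\Gamma$ into two nonempty sets $J \sqcup J^c$, and summing the balancing conditions on one side of the cut yields the resonance condition $\sum_{i \in J} x_i = 0$. The intersection is nonempty (it contains $f$) and has codimension strictly less than $|C|$, since after setting the cut edges to zero the subflows on the two sides of the cut decouple and each contributes an independent dimension. But this intersection disappears the moment $\sum_{i \in J} x_i \neq 0$; so it is a non-transverse intersection that does not persist for every $x$, hence is not good, placing $x$ in $\mathcal{D}$.

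For the inclusion ($\subseteq$), assume $x \in \mathcal{D}$ and fix an $x$-graph $\Gamma$ together with an edge subset $I$ witnessing a non-good, non-transverse intersection. Non-emptiness provides a flow $f \in F_\Gamma(x)$ with $f|_I = 0$; decompose $\Gamma \setminus I = \bigsqcup_{j=1}^k \Gamma_j$, where $k \geq 2$ by non-transversality. The non-emptiness constraint on each $\Gamma_j$ reads $\sum_{i \in \Gamma_j} x_i = 0$, summed over the ends of $\Gamma$ lying in $\Gamma_j$. I would then argue that at least two of the $\Gamma_j$ must contain ends of $\Gamma$: if only one did, every non-emptiness constraint would be vacuous, the intersection would exist uniformly in $x$, and one could exhibit a vertex set $L$ collecting the vertices of the leafless components such that $I$ equals the set of edges incident to $L$, contradicting non-goodness. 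Once two leaf-containing components $\Gamma_j,\Gamma_{j'}$ are identified, picking one end in each and taking a minimal subset of $I$ separating them produces a simple cut $C \subseteq I$; since $f|_I = 0$ forces $f|_C = 0$, the flow $f$ is disconnected.

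The main obstacle is the structural claim in the second step that non-goodness forces at least two leaf-containing components among the $\Gamma_j$. This amounts to distinguishing $x$-independent non-transversalities (which arise from vertex-star relations and fall under the good case) from $x$-dependent ones (which genuinely encode a nontrivial resonance condition on the $x_i$). Making the dichotomy rigorous---in particular, ruling out pathological leafless components with $b_1 \geq 1$ via the compatibility with the vertex ordering required of combinatorial covers, since a nontrivial cycle in a leafless inner subgraph would force a directed cycle in any valid orientation---is the key technical subtlety. This follows the strategy of~\cite{CJMa} with only notational modifications to accommodate the higher-valence vertices allowed here.
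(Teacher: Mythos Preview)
The paper does not give its own proof of this lemma: it is stated with the attribution \cite[Lemma~3.8]{CJMa} and no argument follows. Earlier in \cref{sec:proof} the authors remark that they ``follow the strategy of \cite{CJMa}'' and that ``all results we cite hold for the graphs with higher valency considered in this paper with the same proofs,'' so the lemma is simply imported. Your double-inclusion sketch is therefore not to be compared against anything here, but it is in the spirit of the original argument in \cite{CJMa}.

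That said, two details in your $(\subseteq)$ direction are not quite right. First, your proposed witness $L$ for goodness---the set of all vertices in the leafless components---does not in general satisfy ``$I$ equals the star of $L$'': any edge \emph{internal} to a leafless component is incident to $L$ but does not lie in $I$ (since $I$ consists of edges between components of $\Gamma\setminus I$). The correct verification of goodness requires matching the codimension defect $l$ to the size of $L$ and is more delicate than picking all leafless vertices. Second, your final appeal to orientations and directed cycles is misplaced: the arrangement $\mathcal{A}_\Gamma(x)$ and the notion of a good intersection are defined for the undirected $x$-graph, so the acyclicity of combinatorial covers plays no role at this point. Neither issue breaks the overall strategy, but the goodness bookkeeping should be redone carefully along the lines of the original reference rather than via the shortcuts you propose.
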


Now, let $\Gamma$ admit a disconnected flow and let $C$ be the corresponding simple cut. Then it follows by the balancing condition that the sum $\sum_{i\in I}x_i$ of weights of ends belonging to a connected component of $\Gamma\backslash C$ is $0$. Thus, the walls of the discriminant arrangement are a subset of the hyperplanes in the resonance arrangement. The arrangements are equal since it is easy to construct a graph $\Gamma$, with some edge $e$, such that $\Gamma\backslash e$ has two components, one containing the ends of $I$ and the other containing the ends of $I^c$. Thus $h_{g;x,\lambda}$ is polynomial in each chamber of the resonance arrangement.

\subsection{Wall-crossing}
\label{sec:wall}
In this section, we prove \cref{thm:wall}. We first discuss the combinatorics of cutting an $x-$graph $\Gamma$ into several smaller graphs.

\begin{definition}
Let $\Gamma$ a directed graph and $E$ a subset of the edges of $\Gamma$. We consider the graph whose edges are the connected components of $\Gamma\backslash E^c$ and whose vertices are $E^C$. We call this graph the \textit{contraction} of $\Gamma$ with respect to $E$ and denote it by $\faktor{\Gamma}{E}$.\par 
We fix a directed $x-$graph $\Gamma_A$ and let $I\subset\{1,\dots,n\}$ some subset. Then the \textit{set $\mathrm{Cuts}_I(\Gamma_A)$ of $I-$cuts of $\Gamma_A$} consists of those subset $C$ of the edges of $\Gamma$, such that $C=\emptyset$ or
\begin{enumerate}
\item $\Gamma_A\backslash C$ is disconnected;
\item the ends of $\Gamma_A$ lie on exactly two components of $\Gamma_A\backslash C$, one containing all ends indexed by $I$, the other containing all ends indexed by $I^c$;
\item the directed graph $\faktor{\Gamma}{C^c}$ is \textit{acyclic} and has the component containing $I$ as the initial vertex and the component containing $I^c$ as the final vertex.
\end{enumerate}
Let $v(\Gamma_A\backslash C)$ be the number of components of $\Gamma_A\backslash C$. Then we define the \textit{rank of $C$} by
\begin{equation}
\mathrm{rk}(C)=v(\Gamma_A\backslash C)-1.
\end{equation}
\end{definition}

By the discussion in \cite[Section 6]{CJMa}, we have
\begin{equation}
\label{equ:heavy}
WC(x_2)=\sum_{\Gamma}\sum_{A\in \mathcal{BC}_{\Gamma}(x_2)}\sum_{C\in\mathrm{Cut}_I(\Gamma_A)}(-1)^{\mathrm{rk}(C)-1}\binom{\ell(\lambda)}{s,t_1,\dots,t_N,u}\left(\sum_{\Lambda\cap A}\varphi_{\mathfrak{A}}\prod_iM(v_i)\right),
\end{equation}
where $N=\mathrm{rk}(C)-1$ and $t_1,\dots,t_N$ are the numbers of inner vertices of the $N$ inner components of $\Gamma_A\backslash C$.\par 

\begin{definition}
Let $\Gamma$ be an $x-$graph and $C\in\mathrm{Cuts}_I(\Gamma_A)$. We call $C$ a thin cut, if if all edges in $C$ are either adjacent to the inital component containing $I$ or the component containg $I^c$. Furthermore, for a thin cut $T$, we denote by $P(T)$ the set of all cuts $C\in\mathrm{Cuts}_I(\Gamma_A)$ which contain $T$.
\end{definition}

By \cite[Lemma 8.2]{CJMa}, we have

\begin{equation}
\label{equ:inex}
(-1)^{t}\binom{\ell(\lambda)}{s,t,u}=\sum_{C\in \mathrm{P}(T)}(-1)^{\mathrm{rk}(C)-1}\binom{\ell(\lambda)}{s,t_1,\dots,t_N,u}.
\end{equation}

\begin{remark}
We note that there is a sign mistake in the formulation of \cite[Lemma 8.2]{CJMa} which occurs in the proof of \cite[Lemma 8.4]{CJMa}.
\end{remark}

Combining \cref{equ:heavy} and \cref{equ:inex}, we obtain

\begin{equation}
WC(x_2)=\sum_{\Gamma}\sum_{A\in \mathcal{BC}_{\Gamma}(x_2)}\sum_{\substack{T\in\mathrm{Cut}_I(\Gamma_A)\\\textrm{thin}}}(-1)^{t}\binom{\ell(\lambda)}{s,t,u}\left(\sum_{\Lambda\cap A}\varphi_{\mathcal{A}}\prod_iM(v_i)\right).
\end{equation}

We now observe that each thin cut divides $\Gamma_A$ into three parts: the initial component $\Gamma_A^1$, an intermediate part $\Gamma_A^2$ and a final component $\Gamma_A^3$. Moreover, the intermediate part may be disconnected. Thus, we observe that $\Gamma_A^1$ contributes to $h_{g_1;(x_I,-y),\lambda_1}^{\le}$, $\Gamma_A^2$ to $h_{g_2;(y,-z),\lambda_2}^{\le,\bullet}$ and $\Gamma_A^3$ to $h_{g_3;(z,x_{I^c})}^{\le,\circ}$, $\lambda_1\cup\lambda_2\cup\lambda_3=\lambda$ and $y,z$ are some partitions with $|y|=|x_I|$, $|z|=|x_{I^c}|$. Finally, we observe that 
\begin{equation}
\label{equ:decomp}
\phi_{\Gamma_A}=\frac{\ell(\lambda_1)!\ell(\lambda_2)!)\ell(\lambda_3)!}{\ell(\lambda)!}\frac{\prod y_i}{\ell(y)!}\frac{\prod z_i}{\ell(z)!}\phi_{\Gamma_A^1}\phi_{\Gamma_A^2}\phi_{\Gamma_A^3}.
\end{equation}
and
\begin{equation}
\binom{\ell(\lambda)}{s,t,u}=\binom{\ell(\lambda)}{\ell(\lambda_1),\ell(\lambda_2),\ell(\lambda_3)}=\frac{\ell(\lambda)!}{\ell(\lambda_1)!\ell(\lambda_2)!\ell(\lambda_3)!}
\end{equation} 
which cancels with the factor in \cref{equ:decomp}. This completes the proof of \cref{thm:wall}.

\section{A refined recursion for (strictly) monotone double Hurwitz numbers}
\label{sec:rec}
In this section, we derive recursive formulae for $\vec{h}_{g;x,\lambda}^{\le}$ and $\vec{h}_{g;x,\lambda}^{<}$. We then generalise these results for mixed usual/monotone/strictly monotone Hurwitz numbers.

\begin{theorem}
\label{thm:rec}
Let $\mu$ and $\nu$ be partitions of some positive integer $d$. Moroever, let $g$ be a non-negative integer. Furthermore, we fix an ordered partition $\lambda=(\lambda_1,\dots,\lambda_k)$ with $|\lambda|=2g-2+\ell(\mu)+\ell(\nu)$ and denote $\lambda'=(\lambda_1,\dots,\lambda_{k-1})$. Then we have
\begin{align}
\vec{h}_{g;(\mu,-\nu),\lambda}^{\le,\circ}=&
\,\, \frac{1}{k} \!\!\! \sum_{\substack{I,n,\mu^i,\\\nu^i,\lambda^i,\gamma^i,g_i\\\nu'}}\prod_{i=1}^n\vec{h}_{g_i;(\mu^i,(-\nu^i,-\gamma^i)),\lambda^i}^{\le, \circ}
\frac{1}{|\mathrm{Aut}(\nu_I)|}\cdot\left(\prod_{j=1}^n\prod_{l=1}^{\ell(\gamma^j)}(\gamma^j)_l\right) \times
\\
& \times \sum_{\substack{g_1^k+g_2^k=\\\frac{\lambda_k+2-|I|+\ell(\gamma)}{2}}}\cor{ \!\! \tau_{2g^k_2 - 2} \!\! }_{g_2^k}^{\mathbb{P}^1, \circ} \cor{ \!\!(\gamma^1,\dots,\gamma^n),   \tau_{2g^k_1 - 2 + \sum\ell(\gamma^i)+\ell(\nu')} , \nu'\!\!  }^{\mathbb{P}^1,\circ}_{g_1^k},
\end{align}
and
\begin{align}
\vec{h}_{g;(\mu,-\nu),\lambda}^{<,\circ}=&
\,\, \frac{1}{k} \!\!\!
\sum_{\substack{I,n,\mu^i,\\\nu^i,\lambda^i,\gamma^i,g_i\\\nu'}}\prod_{i=1}^n\vec{h}_{g_i;(\mu^i,(-\nu^i,-\gamma^i)),\lambda^i}^{<, \circ}\cdot 
\frac{1}{|\mathrm{Aut}(\nu_I)|}\cdot \left(\prod_{i=1}^n\prod_{j=1}^{\ell(\gamma^i)}(\gamma^i)_j\right) (-1)^{\sum\ell(\gamma^j)+\ell(\nu')} \times \\
&\times \sum_{\substack{g_1^k+g_2^k=\\\frac{\lambda_k+2-|I|+\ell(\gamma)}{2}}}\cor{ \!\! \tau_{2g^k_2 - 2} \!\! }_{g_2^k}^{\mathbb{P}^1, \circ} \cor{ \!\!(\gamma^1,\dots,\gamma^n),   \tau_{2g^k_1 - 2 + \sum\ell(\gamma^i)+\ell(\nu')} , \nu'\!\!  }^{\mathbb{P}^1,\circ}_{g_1^k} ,
\end{align}
where in both formulas, the first sum is over all
\begin{enumerate}
\item subsets $I\subset\{1,\dots,\ell(\nu)\}$,
\item positive integers $n$,
\item decompositions of $\mu$, $\nu$ and $\lambda$ into $n$ partitions $\mu^1\cup\dots\cup\mu^n=\mu$, $\nu^1\cup\dots\cup\nu^n\cup\nu'=\nu$ and $\lambda^1\cup\dots\cup\lambda^n=\lambda'$, where the $\mu^i$ must be non-empty,
\item partitions $\gamma^i$ of $|\mu^i|-|\nu^i|$, where $\gamma^i$ must be non-empty,
\item non-negative integers $g_i$ with $\sum g_i=g-1+\frac{\lambda_k+2-n}{2}+\frac{3}{2}\sum\gamma^i$.
\end{enumerate}
up to order.
\end{theorem}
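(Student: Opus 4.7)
The plan is to use the combinatorial-cover formulation of \cref{thm:trop} from \cref{sec:setup} and to peel off the rightmost vertex $v_k$ of each cover. Recall that $\vec{h}^{\le,\circ}_{g;(\mu,-\nu),\lambda}$ is a weighted sum over combinatorial covers whose interior vertices $v_1,\dots,v_k$ are ordered from left to right, with $\lambda_i = \val(v_i) + 2g(v_i) - 2$ and $k = \ell(\lambda)$. I would organize this sum according to the local data at $v_k$.

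Given such a cover $\pi\colon\Gamma\to\mathbb{P}^1_{\mathrm{trop}}$, deleting $v_k$ together with its incident internal edges disconnects $\Gamma$ into $n \geq 1$ components $\Gamma_1,\dots,\Gamma_n$. Each $\Gamma_i$ is itself the source of a tropical cover of type $(g_i, (\mu^i, -\nu^i, -\gamma^i), \lambda^i)$: its left-ends carry the sub-partition $\mu^i$ of $\mu$, some of its right-ends carry the weights $\nu^i \subseteq \nu$, and the edges previously joining $\Gamma_i$ to $v_k$ become new right-ends with weights $\gamma^i$. The vertex $v_k$ itself inherits incoming edges of weights $(\gamma^1,\dots,\gamma^n)$ and right-pointing ends of weights $\nu' = \nu_I$ for some $I \subseteq \{1,\dots,\ell(\nu)\}$. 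The genus decomposition $g(\Gamma) = h^1(\Gamma) + \sum_v g(v)$ together with Riemann--Hurwitz at $v_k$ pins down the numerical constraint on $\sum g_i$ recorded in the statement. The next step is to check that every choice of summation data reassembles uniquely into a cover contributing to $\vec{h}^{\le,\circ}_{g;(\mu,-\nu),\lambda}$, with the symmetries to be quotiented out being precisely those encoded by the \enquote{up to order} condition on the outer sum.

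The heart of the argument is then matching the combinatorial factors. The product $\prod_i \vec{h}^{\le,\circ}_{g_i;(\mu^i,-\nu^i,-\gamma^i),\lambda^i}$ absorbs the automorphism factors and the $1/\ell(\lambda^i)!$ normalizations of the pieces. The weights $\prod_i\prod_l(\gamma^i)_l$ restore the edge-weight contributions of the cut edges, which appear as internal edges in the global cover (producing the $\prod_e \omega_e$ factor of \cref{thm:trop}) but as ends in the pieces, where they carry no such factor. The factor $1/|\Aut(\nu_I)|$ accounts for the labelling of those right-ends of $\nu$ attached to $v_k$ with the positions indexed by $I$. The sum over $g_1^k + g_2^k = g(v_k)$ of the Gromov--Witten brackets reproduces the vertex multiplicity $m_{v_k}$ from \cref{thm:trop}, once the prefactor $(\lambda_k-1)!\,|\Aut|$ is absorbed via the closed-form rewriting of \cref{rem:vert}. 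Finally, the global $1/k$ prefactor comes from comparing $1/\ell(\lambda)! = 1/k!$ on the left-hand side with the $\prod_i 1/\ell(\lambda^i)!$ of the pieces, after summing over the interleavings of the ordered sub-partitions $\lambda^i$ within the first $k-1$ slots of $\lambda$.

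For the strictly monotone counterpart, the same peeling argument applies, with the extra sign $(-1)^{1+\val(v)}$ attached to each vertex by \cref{thm:trop}. The vertex signs from the pieces are absorbed into $\vec{h}^{<,\circ}_{g_i;\ldots,\lambda^i}$, while $v_k$ contributes $(-1)^{1+\val(v_k)} = (-1)^{1+\sum\ell(\gamma^i)+\ell(\nu')}$; combined with a sign coming from the orientation of the cut edges this simplifies to the displayed $(-1)^{\sum\ell(\gamma^i)+\ell(\nu')}$. The main obstacle I foresee is precisely this bookkeeping of normalization constants and signs: verifying that the interplay of the $1/k$ prefactor, the $1/|\Aut(\nu_I)|$, the edge-weight factors $\prod(\gamma^i)_l$ and the interleavings of $\lambda$ correctly reproduces the original $1/(|\Aut(\pi)|\,\ell(\lambda)!)$ normalization, and similarly that the sign accounting goes through cleanly in the strictly monotone case.
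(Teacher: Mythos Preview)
Your proposal is correct and follows essentially the same route as the paper: both arguments remove the last inner vertex $v_k$, let the cover fall into $n$ connected pieces $\Gamma_i$ contributing to $\vec{h}^{\le,\circ}_{g_i;(\mu^i,(-\nu^i,-\gamma^i)),\lambda^i}$, verify the genus constraint via an Euler-characteristic count, and then match the weight of $\Gamma$ against $\prod_i \omega(\Gamma_i)$ times the vertex multiplicity at $v_k$, with the $1/k$ arising from the multinomial count $\binom{k-1}{\ell(\lambda^1),\dots,\ell(\lambda^n)}$ of interleavings against the normalizations $\frac{\prod \ell(\lambda^i)!}{k!}$. One small caveat: your phrase ``a sign coming from the orientation of the cut edges'' in the strictly monotone case is not a real mechanism---cut edges carry no sign---so that step of the sign bookkeeping still needs a clean justification; the paper, for its part, simply asserts that the strictly monotone argument ``is the same up to a sign'' without spelling it out either.
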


\begin{proof}
This result is a consequence of \cref{thm:trop}. We focus on the case of monotone Hurwitz numbers, as the argument for strictly monotone Hurwitz numbers is the same up to a sign. The idea is to consider all covers contributing to $\vec{h}_{g;(\mu,-\nu),\lambda}^{\le}$ and removing the last inner vertex which we denote by $w$. Let $\pi:\Gamma\to\mathbb{P}^1_{trop}$ be such a cover.  When we remove the last inner vertex (and thus the adjacent ends which are indexed by $I$), the cover decomposes in possibly many disconnected components. Let $n$ be their number. Each such component yields again a tropical cover $\pi^i:\Gamma^i\to\mathbb{P}^1_{trop}$ mapping to some subset $S^i\subset\{p_1,\dots,p_b\}$. Each cover $\pi^i$ is contained in $\Gamma(\mathbb{P}^1_{trop},g_i;(\mu^i,-\delta^i),\lambda^i)$ some non-negative integer $g_i$, a subpartition $\mu^i$ of $\mu$, a partition $\delta^i$ of $|\mu^i|$ and a subpartition $\lambda^i$ of $\lambda$. We note that $\delta^i$ can be decomposed into a subpartition of $\nu$ which we denote by $\nu^i$ and some partition $\gamma^i$ given by the weights of the edges adjacent to the removed vertex and contained in the $i-$th component, i.e. we have $\delta^i=(\nu^i,\gamma^i)$. This data satisfies conditions (1)--(5) stated in the theorem. The first four conditions are immediate. In order to observe the fifth condition, we consider the Euler characteristics of the graphs $\Gamma$ and $\Gamma_i$. The Euler characteristic of $\Gamma$ is given by
\begin{equation}
\label{equ:rec0}
|V(\Gamma)|-|E(\Gamma)|=1-b_1(\Gamma)=1-g+\sum_{v\in V^{in}(\Gamma)}g(v).
\end{equation}
and the Euler charcteristic of $\Gamma_i$ is given by
\begin{equation}
\label{equ:rec1}
|V(\Gamma_i)|-|E(\Gamma_i)|=1-b_1(\Gamma_i)=1-g+\sum_{v\in V^{in}(\Gamma_i)}g(v).
\end{equation}
However, we see that
\begin{equation}
\label{equ:rec2}
\left(|V(\Gamma)|-1-|I|+\sum \ell(\gamma^i)\right)-(|E(\Gamma)-|I|)=\sum_i|V(\Gamma_i)|-\sum |E(\Gamma_i)|,
\end{equation}
since we remove a single vertex and $|I|$ many ends and leaves attached to it, i.e. $|I|$ vertices and $|I|$ edges. Moreover, all incoming edges of the removed vertices obtain an additional vertex which yields $\sum_i\ell(\gamma^i)$ many vertices. By combining \cref{equ:rec0}, \cref{equ:rec1} and \cref{equ:rec2}, we obtain
\begin{equation}
1-g+\sum_{v\in V^{in}(\Gamma)}g(v)+\sum\ell(\gamma^i)=\sum_{i=1}^n \left(1-g_i+\sum_{v\in V^{in}(\Gamma_i)}g(v)\right).
\end{equation} 
We observe that $\sum_i\sum_{v\in V^{in}(\Gamma_i)}g(v)=\sum_{v\in V(\Gamma)}g(v)-g(w)$ and therefore obtain
\begin{equation}
1-g+g(w)+\sum\ell(\gamma^i)=n-\sum g_i.
\end{equation}
However, we know that $\mathrm{val}(w)=n+\sum \ell(\gamma^i)$ and thus $g(w)=\frac{\lambda_k+2-n+\sum\ell(\gamma^i)}{2}$. Thus, we obtain
\begin{equation}
\sum g_i=g-1+\frac{\lambda_k+2-n}{2}+\frac{3}{2}\sum\ell(\gamma^i),
\end{equation}
which is the last condition.\par 
On the other hand, starting with data $I,n,\mu^i,\nu^i,\lambda^i,\gamma^i,g_i,\nu'$ satisfying these conditions, one can consider $n$ tropical covers $\pi_i:\Gamma_i\to\mathbb{P}^1_{trop}$, where $\pi_i\in\Gamma(\mathbb{P}^1_{trop},g_i,(\mu^i,(-\nu^i,\gamma^i),\lambda^i)$. We can then glue the $\pi^i$s to a cover $\pi:\Gamma\to\mathbb{P}^1_{trop}$ contributing to $\Gamma(\mathbb{P}^1_{trop},g,(\mu,-\nu),\lambda)$: first, we choose subsets $S^i$ of $\{p_1,\dots,p_{k-1}\}$ with $|S^i|=\ell(\lambda^i)$. There are $\binom{\ell(\lambda)-1}{\ell(\lambda^1),\dots,\ell(\lambda^n)}$ such choices. Then the vertices of $\pi^i$ map to the points in $S^i$, while maintaining the order of the images of the vertices in $\pi^i$. We then join the edges with weights corresponding to the partitions $\gamma^i$ to a single vertex $w$, such that these edges are incoming edges and $w$ maps to $p_k$. Moreover, we attach $\ell(\nu')$ outgoing edges to $w$ which are ends with weights in bijection to the entries of $\ell(\nu')$. This way, we obtain a cover $\pi\in\Gamma(\mathbb{P}^1_{trop},g,(\mu,-\nu),\lambda)$. Let $\omega(\Gamma),\omega(\Gamma_i)$ be the weight of the graphs $\Gamma$ and $\Gamma_i$. Then we observe that
\begin{align}
\omega(\Gamma)=&\frac{\prod\ell(\lambda^i)!}{\ell(\lambda)!}\cdot\frac{1}{|\mathrm{Aut}(\nu_I)|}\cdot\prod\omega(\Gamma_i)\cdot\left(\prod_{i=1}^n\prod_{j=1}^{\ell(\gamma^i)}(\gamma^i)_j\right)\\
&\sum_{\substack{g_1^k+g_2^k=\\\frac{\lambda_k+2-|I|+\ell(\gamma)}{2}}}\cor{ \!\! \tau_{2g^k_2 - 2} \!\! }_{g_2^k}^{\mathbb{P}^1, \circ} \cor{ \!\!(\gamma^i,\dots,\gamma^n),   \tau_{2g^k_1 - 2 + + \sum\ell(\gamma^i)+\ell(\nu')} , \nu'\!\!  }^{\mathbb{P}^1,\circ}_{g_1^k}
\end{align}
where we note that $\frac{1}{|\mathrm{Aut}(\nu_I)|}$ contributes to $\frac{1}{|\mathrm{Aut}(\Gamma)|}$. This completes the proof.
\end{proof}

We want now to generalise the statement above to mixed Hurwitz numbers. The following definition expresses mixed $p$-strictly monotone/ $q$-monotone/ $(b - (p+q))$-usual double Hurwitz numbers in terms of tropical covers weighted by Gromov-Witten invariants.

\begin{definition}
Let $g$ be a non-negative integer, and  $x\in\left(\mathbb{Z}\backslash\{0\}\right)^n$ wih $|x^+|=|x^-|=d$, $b = 2g - 2 + n$, let $p$ and $q$ be two integers such that $p + q \leq b$.
Let $\lambda_{(1)}$ be a partition of $p$ and let $\lambda_{(2)}$ be a partition of $q$, set $\tilde{\lambda}_i := 1$ for $i = 1, \dots, b - (p+q)$ and finally set $\lambda := \lambda_{(1)} \cup \lambda_{(2)} \cup \tilde{\lambda} $. We are ready to define the $\lambda$-slice of the mixed $p$-strictly monotone/ $q$-monotone/ $(b - (p+q))$-usual double Hurwitz numbers

\begin{align}
h_{g; x, p, q, \lambda}^{ \times, <, \leq, \bullet} 
&=
\sum_{\pi \in \Gamma( \mathbb{P}^1_{\text{trop}}, g; x,\lambda)}\frac{1}{|\mathrm{Aut}(\pi)|}\frac{1}{\ell(\lambda)!}\prod_{i=1}^p (-1)^{1 + \mathrm{val}(v_i)} m_{v_i} 
\prod_{j=p+1}^{p+q}  m_{v_j} 
\prod_{k=p+q + 1}^{b}  m_{v_k} 
\prod_{e \in E(\Gamma)} \omega_e,
\end{align}
where $\Gamma( \mathbb{P}^1_{\text{trop}}, g; x,\lambda)$ is the set of tropical covers 
$
\pi: \Gamma \longrightarrow \mathbb{P}^1_{trop} = \mathbb{R}
$
with $b=2g-2+n$ points $p_1,\dots,p_b$ fixed on the codomain $\mathbb{P}^1_{trop}$ and $\lambda$ an ordered partition of $b$, such that
\begin{itemize}
\item[\textit{i).}] The unbounded left (resp. right) pointing ends of $\Gamma$ have weights given by the partition $x^+$ (resp. $x^-$).
\item[\textit{ii).}] The graph $\Gamma$ has $l:=\ell(\lambda)\le b$ vertices. Let $V(\Gamma) = \{v_1, \dots, v_l\}$ be the set of its vertices. Then we have $\pi(v_i)=p_i$ for $i=1,\dots,l$. Moreover, let $w_i  = \mathrm{val}(v_i)$ be the corresponding valencies.
\item[\textit{iii).}] We assign an integer $g(v_i)$ as the genus to $v_i$ and the following condition holds true

\begin{equation}
h^1(\Gamma) + \sum_{i=1}^l  g(v_i) = g.
\end{equation}
\item[\textit{iv).}] We have $\lambda_i=\mathrm{val}(v_i)+2g(v_i)-2$.
\item[\textit{v).}] For each vertex $v_i$, let $y^{+}$ (resp. $ y^-$) be the tuple of weights of those edges adjacent to $v_i$ which map to the right-hand (resp. left-hand) of $p_i$. The multiplicity $m_{v_i}$ of $v_i$ is defined to be
\begin{align}
m_{v_i} = &(\lambda_i-1)!|\mathrm{Aut}(y^{+})||\mathrm{Aut}(y^{-})|\\
&\sum_{g_1^i+g_2^i=g(v_i)}\cor{ \!\! \tau_{2g^i_2 - 2}(\omega) \!\! }_{g^i_2}^{\mathbb{P}^1, \circ} \cor{ \!\!y^+,   \tau_{2g^i_1 - 2 + n}(\omega) , y^-\!\!  }^{\mathbb{P}^1,\circ}_{g^i_1}
\end{align}
\end{itemize}
Note that the $m_{v_k}$ above always simplify to either one (in most of the cases) or two (only in case the two half-edges directed towards the same end have equal weights). Furthermore, we define $h_{g;x, p,q, \lambda}^{\times, <, \le,\circ}$  by considering only connected source curves.
\end{definition}


\begin{remark}
It is a straightforward generalisation of theorem \label{thm:trop} in \cite{HL} the fact that these numbers 
$h_{g;x, p,q, \lambda}^{\times,<,\leq, \bullet}$
are
the $\lambda$-slices of mixed usual/monotone/strictly-monotone Hurwitz numbers, meaning that if we define

\begin{equation}
h_{g;x, p,q}^{\times,<,\leq, \bullet}
\coloneqq
 \sum_{\substack{\lambda  = (\lambda_{(1)}, \lambda_{(2)}, \tilde{\lambda}) \vdash b 
 \\
 \lambda_i = 1, \,\, i = p+q+1, \dots, b
 \\
 \lambda_{(1)} \vdash p, \,\,\,
 \lambda_{(2)} \vdash q
} }
h_{g;x, p,q, \lambda}^{\times,<,\leq, \bullet}
\end{equation}
then $h_{g;x, p,q}^{\times,<,\leq, \bullet}$ enumerates all weighted ramified covers of degree $d = |x^+| = |x^-| $ of the Riemann sphere by genus $g$ compact surfaces where the ramification profiles over zero and infinity are given by $x^{+}$ and $x^{-} $, respectively, and all other ramifications are simple (and therefore can be represented as transpositions $(a_i, b_i)_{i=1, \dots, b}$ with $1 \leq a_i < b_i \leq d$), in such a way that the first $p$ simple ramifications satisfy the strictly monotone condition, the following $q$ satisfy the weakly monotone condition, and the remaining $b - (p + q)$ are usual simple ramifications (and hence do not satisfy any additional requirement):
\begin{enumerate}
\item $b_i < b_{i+1}, \qquad $ for $i = 1, \dots, \, p-1$,
\item $b_i \leq b_{i+1}, \qquad $ for $i = p+1, \dots, \, p+q-1$.
\end{enumerate}
\end{remark}
With the notations above, we are going to generalise theorem \ref{thm:rec} by cutting one vertex of the tropical covers. However, there are now three different types of vertices, as opposed to one in \ref{thm:rec}: the strictly monotone vertices, the weakly monotone vertices, and the usual vertices. We therefore obtain three different recursions, depending on which type of vertex we are cutting. Note that the first and the second type of vertex differ just by a sign factor in their weights, whereas the third type is extremely simple as its genus is zero and its cardinality must be equal to three. It is moreover possible to have first and second type of vertices which happen to be usual vertices ( this happens if and only if they come from parts of $\lambda$ equal to one in the first $p+q$ parts): we still treat them according to their general nature, as the formula for their weight in that case naturally specialises to the weight of usual vertices.

\begin{corollary}
\label{cor:mixedrec}
Let $\mu$ and $\nu$ be partitions of some positive integer $d$, let $ g, p, q $ be a non-negative integers, let $\lambda$ be a partition $\lambda= \lambda_{(1)} \cup \lambda_{(2)} \cup \tilde{\lambda} = (\lambda_1,\dots,\lambda_k)$ with $|\lambda|= b = 2g-2+\ell(\mu)+\ell(\nu)$, $\lambda_{(1)} \vdash p, \, \lambda_{(2)}\vdash q, \, p+q \leq b, \, \tilde{\lambda}_i = 1$ for all $i$, and for a partition $\sigma$ denote $\sigma' = \sigma \setminus \{ \sigma_{\ell(\sigma)} \}$.
 Then we have the following three recursions:
 \begin{enumerate}
 \item[\textit{i).}] 
 \textbf{Cutting along a strictly monotone vertex.}
\begin{align}
\vec{h}_{g;(\mu,-\nu), p,q, \lambda}^{\times, <, \leq, \circ}=
 & \,\, \frac{1}{k} \!\!\!
 \sum_{
 \substack{
 I,n,\mu^i,\nu^i,\nu' 
 \\ 
 \lambda_{(1)}^i, \lambda_{(2)}^i, \tilde{\lambda}^i,\gamma^i,g_i
 }
 }
 \prod_{i=1}^n\vec{h}_{g_i;(\mu^i,(-\nu^i,-\gamma^i)),\lambda^i}^{<, \circ}\cdot (-1)^{\sum\ell(\gamma^j)+\ell(\nu')}
\cdot\frac{1}{|\mathrm{Aut}(\nu_I)|}\cdot \left(\prod_{i=1}^n\prod_{j=1}^{\ell(\gamma^i)}(\gamma^i)_j\right)\\
&\sum_{\substack{g_1^k+g_2^k=\\\frac{\lambda_k+2-|I|+\ell(\gamma)}{2}}}\cor{ \!\! \tau_{2g^k_2 - 2} \!\! }_{g_2^k}^{\mathbb{P}^1, \circ} \cor{ \!\!(\gamma^i,\dots,\gamma^n),   \tau_{2g^k_1 - 2 + + \sum\ell(\gamma^i)+\ell(\nu')} , \nu'\!\!  }^{\mathbb{P}^1,\circ}_{g_1^k},
\end{align}
 \item[\text{ii).}]
 \textbf{Cutting along a weakly monotone vertex.}
\begin{align}
\vec{h}_{g;(\mu,-\nu), p,q, \lambda}^{\times, <, \leq, \circ}=
 & \,\, \frac{1}{k} \!\!\!
 \sum_{
 \substack{
 I,n,\mu^i,\nu^i,\nu' 
 \\ 
 \lambda_{(1)}^i, \lambda_{(2)}^i, \tilde{\lambda}^i,\gamma^i,g_i
 }
 }
 \prod_{i=1}^n\vec{h}_{g_i;(\mu^i,(-\nu^i,-\gamma^i)),\lambda^i}^{<, \circ}
\cdot\frac{1}{|\mathrm{Aut}(\nu_I)|}\cdot \left(\prod_{i=1}^n\prod_{j=1}^{\ell(\gamma^i)}(\gamma^i)_j\right)\\
&\sum_{\substack{g_1^k+g_2^k=\\\frac{\lambda_k+2-|I|+\ell(\gamma)}{2}}}\cor{ \!\! \tau_{2g^k_2 - 2} \!\! }_{g_2^k}^{\mathbb{P}^1, \circ} \cor{ \!\!(\gamma^i,\dots,\gamma^n),   \tau_{2g^k_1 - 2 + + \sum\ell(\gamma^i)+\ell(\nu')} , \nu'\!\!  }^{\mathbb{P}^1,\circ}_{g_1^k},
\end{align}
 \item[\text{iii).}]
 \textbf{Cutting along a usual vertex.}
\begin{align}
\vec{h}_{g;(\mu,-\nu), p,q, \lambda}^{\times, <, \leq, \circ}=
 & \,\, \frac{1}{k} \!\!\!
 \sum_{
 \substack{
 I,n \leq 2,\mu^i,\nu^i,\nu' 
 \\ 
 \lambda_{(1)}^i, \lambda_{(2)}^i, \tilde{\lambda}^i,\gamma^i,g_i
 }
 }
 \prod_{i=1}^n\vec{h}_{g_i;(\mu^i,(-\nu^i,-\gamma^i)),\lambda^i}^{<, \circ}
\cdot\frac{1}{|\mathrm{Aut}(\nu_I)|}\cdot \left(\prod_{i=1}^n\prod_{j=1}^{\ell(\gamma^i)}(\gamma^i)_j\right)
\end{align}
\end{enumerate}
where in all three formulas, the first sum is over all
\begin{enumerate}
\item subsets $I\subset\{1,\dots,\ell(\nu)\}$,
\item positive integers $n$ (smaller or equal than $2$ in the third recursion),
\item decompositions of $\mu$, $\nu$ and $\lambda$ into $n$ partitions 
$
\mu^1\cup\dots\cup\mu^n=\mu, \nu^1\cup\dots\cup\nu^n\cup\nu'=\nu,
$
where the $\mu^i$ must be non-empty,
\item partitions $\gamma^i$ of $|\mu^i|-|\nu^i|$, where $\gamma^i$ must be non-empty, 
\item non-negative integers $g_i$ with $\sum g_i=g-1+\frac{\lambda_k+2-n}{2}+\frac{3}{2}\sum\gamma^i$,
\item in the third case we require $|\nu'| = 3 - \sum_i \ell(\gamma^i)$.
\end{enumerate}
up to order, and moreover
\begin{enumerate}
\item[\textit{i).}] when cutting over a strictly monotone vertex we have
 $$ 
 \lambda_{(1)}^1\cup\dots\cup\lambda_{(1)}^n=\lambda_{(1)}', 
 \qquad
 \lambda_{(2)}^1\cup\dots\cup\lambda_{(2)}^n=\lambda_{(2)},
 \qquad
 \tilde{\lambda}^1 \cup \dots \cup \tilde{\lambda}^n=\tilde{\lambda}.
 $$
 \item[\textit{ii).}] when cutting over a weakly monotone vertex we have
 $$ 
 \lambda_{(1)}^1\cup\dots\cup\lambda_{(1)}^n=\lambda_{(1)}, 
 \qquad
 \lambda_{(2)}^1\cup\dots\cup\lambda_{(2)}^n=\lambda_{(2)}',
 \qquad
 \tilde{\lambda}^1 \cup \dots \cup \tilde{\lambda}^n=\tilde{\lambda}.
 $$
 \item[\textit{iii).}] when cutting over a usual vertex we have
 $$ 
 \lambda_{(1)}^1\cup\dots\cup\lambda_{(1)}^n=\lambda_{(1)}, 
 \qquad
 \lambda_{(2)}^1\cup\dots\cup\lambda_{(2)}^n=\lambda_{(2)},
 \qquad
 \tilde{\lambda}^1 \cup \dots \cup \tilde{\lambda}^n=\tilde{\lambda}'.
 $$
\end{enumerate}
\begin{proof}
The proof is a straightforward generalisation of the one of theorem \ref{thm:rec}. The main difference is that we need to keep track of the partitions of $p$ and $q$ when cutting, and eliminate the right cut vertex from the summations over $\lambda_{(1)}^i, \lambda_{(2)}^i, \tilde{\lambda}^i$. The cut vertex in third recursion has genus zero and valency exactly three: the recursion has trivial residue Gromov-Witten invariants and $n$ gets bounded by two. The extra signs appear only in the first recursion, when we cut a vertex of strict monotone type. This concludes the proof of the corollary.
\end{proof}
\end{corollary}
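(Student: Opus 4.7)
The plan is to emulate the proof of Theorem \ref{thm:rec} almost verbatim, replacing the vertex multiplicities by the mixed multiplicities introduced just above the corollary. The starting point is the tropical correspondence theorem for mixed $p$-strictly-monotone / $q$-weakly-monotone / $(b-(p+q))$-usual double Hurwitz numbers, which is a direct generalisation of Theorem \ref{thm:trop}. I would fix the last branch point $p_k$ on the base and consider, for any contributing tropical cover $\pi:\Gamma\to\mathbb{P}^1_{\text{trop}}$, the unique vertex $w$ mapping to $p_k$. Removing $w$ together with its adjacent ends indexed by some $I \subset \{1,\dots,\ell(\nu)\}$ splits $\Gamma$ into $n$ connected components $\Gamma_1,\dots,\Gamma_n$, each of which represents a tropical cover $\pi_i:\Gamma_i\to\mathbb{P}^1_{\text{trop}}$ of type $(g_i;(\mu^i,-\nu^i,-\gamma^i),\lambda^i)$, where $\gamma^i$ collects the weights of the edges incident to $w$ that lie in $\Gamma_i$.

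The Euler-characteristic bookkeeping is identical to the one in Theorem \ref{thm:rec}: removing $w$ and its $|I|$ ends deletes $|I|+1$ vertices and $|I|$ edges, while splitting each of the $\sum_i \ell(\gamma^i)$ internal edges at $w$ creates a new vertex on some $\Gamma_i$. This yields the genus constraint $\sum g_i = g-1 + (\lambda_k+2-n)/2 + \tfrac{3}{2}\sum_i \ell(\gamma^i)$. The three recursions correspond to the three possible types of $w$, encoded by whether $\lambda_k$ belongs to $\lambda_{(1)}$, to $\lambda_{(2)}$ or to $\tilde\lambda$; this dictates the inheritance rules for the decompositions $\lambda_{(1)}^1\cup\cdots\cup\lambda_{(1)}^n$, $\lambda_{(2)}^1\cup\cdots\cup\lambda_{(2)}^n$ and $\tilde\lambda^1\cup\cdots\cup\tilde\lambda^n$ stated in (i), (ii), (iii). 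When $w$ is strictly monotone, its vertex factor carries the sign $(-1)^{1+\mathrm{val}(w)}$ coming from the definition of $h^{\times,<,\leq,\bullet}$; since $\mathrm{val}(w) = \sum_i \ell(\gamma^i) + \ell(\nu')$, the portion of this sign that is not absorbed into the sub-covers produces exactly $(-1)^{\sum_i \ell(\gamma^i)+\ell(\nu')}$. When $w$ is weakly monotone this sign is absent, and when $w$ is a usual vertex we have $g(w)=0$ and $\mathrm{val}(w)=3$, forcing $\sum_i \ell(\gamma^i)+\ell(\nu')=3$, which collapses $n\leq 2$ and replaces the residue Gromov--Witten factor by the trivial three-point invariant equal to $1$.

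The remaining step is the reconstruction: gluing the $\pi_i$'s back along a new vertex over $p_k$, with $\ell(\nu')$ outgoing ends and weights $\gamma^1,\dots,\gamma^n$ on the incoming edges, is the inverse of the cutting procedure. The factor $1/k$ accounts for singling out one of the $k$ images; the multinomial $\binom{k-1}{\ell(\lambda^1),\dots,\ell(\lambda^n)}$ counts the interleavings of the sub-cover branch point orderings and cancels against the $\ell(\lambda^i)!$ in the denominators of the sub-$\vec h$'s together with the $1/\ell(\lambda)!$ from the outer definition; the symmetry factor $1/|\mathrm{Aut}(\nu_I)|$ and the edge-weight product $\prod_i\prod_j (\gamma^i)_j$ capture, respectively, the combinatorial symmetries among the outgoing ends at $w$ and the internal edge weights removed by the cut, following the same accounting as in Theorem \ref{thm:rec}.

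The main obstacle, as in Theorem \ref{thm:rec}, is keeping track of the various automorphism and sign factors introduced by the splitting, particularly in degenerate configurations where $w$ happens to have small valency or where some $\gamma^i$'s or $\nu'$-parts coincide. I expect no genuinely new difficulty beyond Theorem \ref{thm:rec}, because the three vertex types affect only the multiplicity attached to $w$ itself: the strictly monotone sign factors cleanly out as described above, and the degenerate usual-vertex case is rigid enough that the GW factor becomes trivial and the summation range shrinks to $n\leq 2$, as recorded in item (iii) of the corollary.
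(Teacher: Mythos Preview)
Your proposal is correct and follows essentially the same approach as the paper: both argue that the corollary is a direct generalisation of Theorem~\ref{thm:rec}, obtained by tracking the type of the cut vertex (and hence which of $\lambda_{(1)},\lambda_{(2)},\tilde\lambda$ loses a part), noting that the sign $(-1)^{1+\mathrm{val}(w)}$ survives only in the strictly monotone case, and that in the usual case the constraint $g(w)=0$, $\mathrm{val}(w)=3$ trivialises the Gromov--Witten factor and forces $n\le 2$. Your write-up is in fact more detailed than the paper's own proof, which is only a brief sketch.
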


\bibliographystyle{acm}
\bibliography{literature.bib}

\begin{thebibliography}{10}

\bibitem{ALS}
{\sc Alexandrov, A., Lewanski, D., and Shadrin, S.}
\newblock {Ramifications of {H}urwitz theory, {KP} integrability and quantum
  curves}.
\newblock {\em Journal of High Energy Physics 2016}, 5 (2016), 124.

\bibitem{ABBR}
{\sc Amini, O., Baker, M., Brugall{\'e}, E., and Rabinoff, J.}
\newblock {Lifting harmonic morphisms I: metrized complexes and {B}erkovich
  skeleta}.
\newblock {\em Research in the Mathematical Sciences 2}, 1 (2015), 7.

\bibitem{CJMa}
{\sc Cavalieri, R., Johnson, P., and Markwig, H.}
\newblock {Wall crossings for double {H}urwitz numbers}.
\newblock {\em Adv. Math. 228}, 4 (2011), 1894--1937.

\bibitem{CJMRgraphical}
{\sc Cavalieri, R., Johnson, P., Markwig, H., and Ranganathan, D.}
\newblock {A graphical interface for the Gromov-Witten theory of curves}.
\newblock {\em Algebraic Geometry: Salt Lake City 2015: 2015 Summer Research
  Institute, July 13-31, 2015, University of Utah, Salt Lake City, Utah 97\/}
  (2018), 139.

\bibitem{DK}
{\sc Do, N., and Karev, M.}
\newblock {Monotone orbifold {H}urwitz numbers}.
\newblock {\em Journal of Mathematical Sciences 226}, 5 (2017), 568--587.

\bibitem{do2013quantum}
{\sc Do, N., and Manescu, D.}
\newblock {Quantum curves for the enumeration of ribbon graphs and hypermaps}.
\newblock {\em arXiv preprint arXiv:1312.6869\/} (2013).

\bibitem{dumitrescu2013spectral}
{\sc Dumitrescu, O., Mulase, M., Safnuk, B., and Sorkin, A.}
\newblock {The spectral curve of the Eynard-Orantin recursion via the Laplace
  transform}.
\newblock {\em Contemp. Math 593\/} (2013), 263--315.

\bibitem{dunin2019cut}
{\sc Dunin-Barkowski, P., Kramer, R., Popolitov, A., and Shadrin, S.}
\newblock {Cut-and-join equation for monotone Hurwitz numbers revisited}.
\newblock {\em Journal of Geometry and Physics 137\/} (2019), 1--6.

\bibitem{dunin2017combinatorics}
{\sc Dunin-Barkowski, P., Orantin, N., Popolitov, A., and Shadrin, S.}
\newblock {Combinatorics of loop equations for branched covers of sphere}.
\newblock {\em International Mathematics Research Notices 2018}, 18 (2017),
  5638--5662.

\bibitem{ekedahl2001hurwitz}
{\sc Ekedahl, T., Lando, S., Shapiro, M., and Vainshtein, A.}
\newblock {Hurwitz numbers and intersections on moduli spaces of curves}.
\newblock {\em Invent. Math. 146}, 2 (2001), 297--327.

\bibitem{goulden2005towards}
{\sc Goulden, I., Jackson, D.~M., and Vakil, R.}
\newblock {Towards the geometry of double Hurwitz numbers}.
\newblock {\em Adv. Math. 198}, 1 (2005), 43--92.

\bibitem{goulden2013monotone}
{\sc Goulden, I.~P., Guay-Paquet, M., and Novak, J.}
\newblock {Monotone Hurwitz numbers in genus zero}.
\newblock {\em Canadian Journal of Mathematics 65}, 5 (2013), 1020--1042.

\bibitem{goulden2014monotone}
{\sc Goulden, I.~P., Guay-Paquet, M., and Novak, J.}
\newblock {Monotone Hurwitz numbers and the HCIZ integral}.
\newblock In {\em Annales math{\'e}matiques Blaise Pascal\/} (2014), vol.~21,
  pp.~71--89.

\bibitem{goulden2016toda}
{\sc {Goulden}, I.~P., {Guay-Paquet}, M., and {Novak}, J.}
\newblock {Toda equations and piecewise polynomiality for mixed double Hurwitz
  numbers.}
\newblock {\em {SIGMA, Symmetry Integrability Geom. Methods Appl.} 12\/}
  (2016), paper 040, 10.

\bibitem{hahn2017monodromy}
{\sc Hahn, M.~A.}
\newblock {A monodromy graph approach to the piecewise polynomiality of mixed
  double {H}urwitz numbers}.
\newblock {\em arXiv preprint arXiv:1703.05590\/} (2017).

\bibitem{HKL}
{\sc Hahn, M.~A., Kramer, R., and Lewanski, D.}
\newblock Wall-crossing formulae and strong piecewise polynomiality for mixed
  grothendieck dessins d'enfant, monotone, and double simple hurwitz numbers.
\newblock {\em Adv. Math. 336\/} (2018), 38--69.

\bibitem{HL}
{\sc Hahn, M.~A., and Lewanski, D.}
\newblock {Tropical Jucys Covers}.
\newblock {\em arXiv preprint arXiv:1808.01383\/} (2018).

\bibitem{hahn2019triply}
{\sc Hahn, M.~A., van Ittersum, J.-W.~M., and Leid, F.}
\newblock {Triply mixed coverings of arbitrary base curves: Quasimodularity,
  quantum curves and recursions}.
\newblock {\em arXiv preprint arXiv:1901.03598\/} (2019).

\bibitem{Hurwitz}
{\sc Hurwitz, A.}
\newblock {{\"U}ber Riemann'sche Fl{\"a}chen mit gegebenen
  Verzweigungspunkten}.
\newblock {\em Math. Ann. 39}, 1 (1891), 1--60.

\bibitem{Hurwitz2}
{\sc Hurwitz, A.}
\newblock {{\"U}ber algebraische Gebilde mit eindeutigen Transformationen in
  sich}.
\newblock {\em Math. Ann. 41}, 3 (1892), 403--442.

\bibitem{johnson2015double}
{\sc Johnson, P.}
\newblock {Double Hurwitz numbers via the infinite wedge}.
\newblock {\em Trans. Amer. Math. Soc. 367}, 9 (2015), 6415--6440.

\bibitem{kazarian2015virasoro}
{\sc Kazarian, M., and Zograf, P.}
\newblock {Virasoro constraints and topological recursion for Grothendieck’s
  dessin counting}.
\newblock {\em Letters in Mathematical Physics 105}, 8 (2015), 1057--1084.

\bibitem{norbury2013string}
{\sc Norbury, P.}
\newblock {String and dilaton equations for counting lattice points in the
  moduli space of curves}.
\newblock {\em Trans. Amer. Math. Soc. 365}, 4 (2013), 1687--1709.

\bibitem{OP}
{\sc Okounkov, A., and Pandharipande, R.}
\newblock {Gromov-{W}itten theory, {H}urwitz theory, and completed cycles}.
\newblock {\em Annals of mathematics 163\/} (2006), 517--560.

\bibitem{OP2}
{\sc Okounkov, A., and Pandharipande, R.}
\newblock {The equivariant {G}romov-{W}itten theory of $\mathbb{P}^1$}.
\newblock {\em Annals of mathematics 163\/} (2006), 561--605.

\bibitem{shadrin2008chamber}
{\sc Shadrin, S., Shapiro, M., and Vainshtein, A.}
\newblock {Chamber behavior of double Hurwitz numbers in genus 0}.
\newblock {\em Adv. Math. 217}, 1 (2008), 79--96.

\bibitem{V}
{\sc Vakil, R.}
\newblock {The moduli space of curves and {G}romov-{W}itten theory}.
\newblock {\em Enumerative invariants in algebraic geometry and string theory
  2.2.1}, 3.3 (2008), 143--198.

\end{thebibliography}

\end{document}